
\documentclass[reqno]{amsart}
\usepackage{amscd,amsfonts,amssymb, amsmath}
\usepackage{graphicx, color}

\textwidth=13.8cm \textheight=21.7cm \topmargin=0.0cm
\oddsidemargin=1.0cm \evensidemargin=1.0cm

\voffset=0.5cm

\numberwithin{equation}{section}
\newtheorem{Theorem}{Theorem}[section]
\newtheorem{Lemma}{Lemma}[section]

\theoremstyle{definition}
\newtheorem{Definition}{Definition}[section]
\theoremstyle{remark}
\newtheorem{Remark}{Remark}[section]
\newtheorem{Proposition}{Proposition}[section]

\renewcommand{\r}{\rho}
\renewcommand{\t}{\theta}

\renewcommand{\u}{{\bf u}}
\renewcommand{\H}{{\bf H}}
\newcommand{\R}{{\mathbb R}}
\newcommand{\Dv}{{\rm div}}

\newcommand{\m}{{\bf m}}

\newcommand{\na}{\nabla}
\newcommand{\dl}{\delta}
\newcommand{\E}{{\mathcal E}}
\def\f{\frac}
\renewcommand{\O}{\Omega}
\def\ov{\overline}


\def\hf1{^\f{1}{1-\xi^2}}

\def\be{\begin{equation}}
\def\en{\end{equation}}
\def\bs{\begin{split}}
\def\es{\end{split}}

\title[Global Solutions to Magnetohydrodynamic Flows]
{Global solutions to the three-dimensional full compressible
magnetohydrodynamic flows}

\author{Xianpeng HU and Dehua Wang}
\address{Department of Mathematics, University of Pittsburgh,
                           Pittsburgh, PA 15260, USA.}
\email{xih15@pitt.edu}
\address{Department of Mathematics, University of Pittsburgh,
                           Pittsburgh, PA 15260, USA.}
\email{dwang@math.pitt.edu}

\keywords{Magnetohydrodynamics,  magnetic field, heat conductivity, global variational weak solution, large data.}
\subjclass{ 35Q36, 35D05, 76W05.}



\begin{document}

\begin{abstract}
The equations of the three-dimensional viscous, compressible, and
heat conducting magnetohydrodynamic flows are considered in a
bounded domain. The viscosity coefficients and heat conductivity
can depend on the temperature. A solution to the initial-boundary
value problem  is constructed through an approximation scheme and
a weak convergence method. The existence  of  a global variational
weak solution to the three-dimensional full magnetohydrodynamic
equations with large data is established.
\end{abstract}

\maketitle

\section{Introduction}

Magnetohydrodynamics, or MHD,  studies the dynamics of
electrically conducting fluids and the theory of the macroscopic
interaction of electrically conducting fluids with a magnetic
field. The applications of magnetohydrodynamics cover a very wide
range of physical areas from liquid metals to cosmic plasmas, for
example, the  intensely heated and ionized fluids in an
electromagnetic field in astrophysics, geophysics, high-speed
aerodynamics, and plasma physics. Astrophysical problems include
solar structure, especially in the outer layers, the solar wind
bathing the earth and other planets, and interstellar magnetic
fields. The primary geophysical problem is planetary magnetism,
produced by currents deep in the planet, a problem that has not
been solved to any degree of satisfaction. Magnetohydrodynamics is
of importance in connection with many engineering problems as
well, such as sustained plasma confinement for controlled
thermonuclear fusion, liquid-metal cooling of nuclear reactors,
magnetohydrodynamic power generation, electro-magnetic casting of
metals, and plasma accelerators for ion thrusters for spacecraft propulsion.
Due to their practical relevance, magnetohydrodynamic
 problems have long been the subject of intense
cross-disciplinary research, but except for relatively simplified
special cases, the rigorous mathematical analysis of such problems
remains open.

In magnetohydrodynamic flows, magnetic fields can induce currents
in a moving conductive fluid, which create forces on the fluid,
and also change the magnetic field itself.
 There is a complex interaction between
the magnetic and fluid dynamic phenomena, and both hydrodynamic
and electrodynamic effects have to be considered. The set of
equations which describe compressible viscous magnetohydrodynamics
are a combination of the compressible Navier-Stokes equations of
fluid dynamics and Maxwell's equations of electromagnetism. In
this paper, we consider the full system of partial differential
equations for the three-dimensional viscous compressible
magnetohydrodynamic flows in the Eulerian coordinates (\cite{KL,LL}):
\begin{subequations} \label{1}
\begin{align}
&\r_t +\Dv(\r\u)=0, \label{11} \\
&(\r\u)_t+\Dv\left(\r\u\otimes\u\right)+\nabla p
  =(\na \times \H)\times \H+\Dv\Psi, \label{12} \\
&\E_t+\Dv\big(\u(\E'+p)\big)
=\Dv\big((\u\times\H)\times\H+\nu\H\times(\nabla\times\H)+\u\Psi+\kappa\nabla\theta\big),
  \label{13} \\
&\H_t-\nabla\times(\u\times\H)=-\nabla\times(\nu\nabla\times\H),\qquad
\Dv\H=0,\label{14}
\end{align}
\end{subequations}
where  $\r$ denotes the density, $\u\in \R^3$ the velocity, 
$\H\in \R^3$ the magnetic field, and $\theta$ the temperature; $\Psi$ is the viscous stress tensor given by
$$\Psi=\mu(\nabla\u+\nabla\u^T)+\lambda\,\Dv\u\,\mathbf{I},$$
and $\E$ is the total
energy given by $$\E=\r\left(e+\f{1}{2}|\u|^2\right)+\f{1}{2}|\H|^2
\textrm{ and } \E'=\r\left(e+\f{1}{2}|\u|^2\right),$$ with $e$ the
internal energy, $\f{1}{2}\r|\u|^2$ the kinetic energy, and
$\f{1}{2}|\H|^2$ the magnetic energy. The equations of state
$p=p(\r,\theta)$, $e=e(\r,\theta)$ relate the pressure $p$ and the
internal energy $e$ to the density and the temperature of the flow;
$\mathbf{I}$ is the $3\times 3$ identity matrix, and $\nabla\u^T$ is
the transpose of the matrix $\nabla\u$. The viscosity coefficients
$\lambda, \mu$ of the flow satisfy $2\mu+3\lambda>0$ and $\mu>0$; $\nu>0$ is the magnetic
diffusivity acting as a magnetic diffusion coefficient of the
magnetic field,  $\kappa>0$ is the heat
conductivity. Equations \eqref{11}, \eqref{12},
\eqref{13} describe the conservation of mass,  momentum,
and energy, respectively. It is well-known that the
electromagnetic fields are governed by the Maxwell's equations. In
magnetohydrodynamics, the displacement current  can be neglected
(\cite{KL, LL}). As a consequence, the equation \eqref{14} is called
the induction equation, and the electric field can be written in
terms of the magnetic field $\H$ and the velocity $\u$,
\begin{equation*} 
{\bf E}=\nu\nabla\times\H - \u\times\H.
\end{equation*}
Although the electric field ${\bf E}$ does not appear in the MHD
system \eqref{1}, it is indeed induced according to the
above relation by the moving conductive flow in the magnetic field.

There have been a lot of studies on magnetohydrodynamics by
physicists and mathematicians because of its physical importance,
complexity, rich phenomena, and mathematical challenges; see
\cite{gw, gw2, f6, FJN, h1, hw1, HT, LL, w1} and the references
cited therein. In particular, the one-dimensional problem has been
studied in many papers, for examples, \cite{gw, gw2,
FJN,HT,sm,tz,w1} and so on. However, many fundamental problems for
MHD are still open. For example, even for the one-dimensional
case, the global existence of classical solutions to the full
perfect MHD equations with large data remains unsolved when all
the viscosity, heat conductivity, 
and magnetic diffusivity coefficients are
constant, although the corresponding problem for the Navier-Stokes
equations was solved in \cite{KS} long time ago. The reason is
that the presence of the magnetic field and its interaction with
the hydrodynamic motion in the MHD flow of large oscillation cause
serious difficulties. In this paper we consider the global weak
solution to the three-dimensional MHD problem with large data, and
investigate the fundamental problem of global existence.

More precisely, we study the initial-boundary value problem of
\eqref{1} in a bounded spatial domain $\O\subset\R^3$ with the initial data:
\begin{equation}\label{116}
(\r, \r\u, \H, \t)|_{t=0}=(\r_0, m_0, \H_0, \t_0)(x), \quad x\in\O,
\end{equation}
and  the no-slip boundary conditions on the velocity and the magnetic field, and
the thermally insulated boundary condition on the heat flux $q=-\kappa\nabla\theta$:
\begin{equation}\label{17}
\u|_{\partial\O}=0, \quad \H|_{\partial\O}=0, \quad q|_{\partial\O}=0.
\end{equation}
The aim of this paper is to construct the solution of the
initial-boundary value problem of \eqref{1}-\eqref{17} and
establish the global existence theory of variational weak
solutions. In Hu-Wang \cite{hw1}, we studied global weak solutions
to the initial-boundary value problem of the isentropic case for
the three-dimensional MHD flow, while in this paper we study the
full nonisentropic case. We are interested in the case that the
viscosity and heat conductivity coefficients $\mu=\mu(\theta),
\lambda=\lambda(\t), \kappa=\kappa(\t)$
 are positive functions of the temperature $\theta$; and
the magnetic diffusivity coefficient $\nu>0$ is assumed to be a constant in order to avoid
unnecessary technical details.
As for the pressure $p=p(\r,\t)$, it will be determined through a general constitutive equation:
\begin{equation} \label{18}
p=p(\r,\t)=p_e(\r)+\t p_{\t}(\r)
\end{equation}
for certain functions $p_e$, $p_{\t}\in C[0,\infty)\cap
C^1(0,\infty)$. The basic principles of classical thermodynamics
imply that the internal energy $e$ and pressure $p$ are
interrelated through Maxwell's relationship:
\begin{equation*}
\frac{\partial e}{\partial
\r}=\frac{1}{\r^2}\left(p-\t\frac{\partial p}{\partial \t}\right),\quad
\frac{\partial e}{\partial \t}=\frac{\partial Q}{\partial
\t}=c_\upsilon(\t),
\end{equation*}
where $c_\upsilon(\t)$ denotes the specific heat and $Q=Q(\t)$ is a function of $\t$.
Thus, the constitutive relation \eqref{18} implies that the internal
energy $e$ can be decomposed as a sum:
\begin{equation} \label{19}
e(\r,\t)=P_e(\r)+Q(\t),
\end{equation}
where
\begin{equation*}
P_e(\r)=\int_1^{\r}\f{p_e(\xi)}{\xi^2}d\xi,\quad Q(\t)=\int_0^{\t}c_\upsilon(\xi)d\xi.
\end{equation*}

If the flow is smooth,  multiplying equation \eqref{12} by $\u$ and  \eqref{14} by $\H$, and summing them together, we obtain
\begin{equation}\label{111}
\begin{split}
&\f{d}{dt}\left(\f{1}{2}\r|\u|^2+\f{1}{2}|\H|^2\right)+\Dv\left(\f{1}{2}\r|\u|^2\u\right)+\nabla
p\cdot\u\\& =\Dv\Psi\cdot
\u+(\nabla\times\H)\times\H\cdot\u+\nabla\times(\u\times\H)\cdot\H-\nabla\times(\nu\nabla\times\H)\cdot\H.
\end{split}
\end{equation}
Subtracting \eqref{111} from \eqref{13}, we obtain the internal
energy equation:
\begin{equation}\label{112}
\partial_t (\r e)+\Dv(\r\u e)+(\Dv\u)p=\nu|\nabla\times\H|^2+\Psi:\nabla\u+\Dv(\kappa\nabla\theta),
\end{equation}
using
$$\Dv(\nu\H\times(\nabla\times\H))=\nu|\nabla\times\H|^2-\nabla\times(\nu\nabla\times\H)\cdot\H,$$
and
\begin{equation}\label{113}
\Dv((\u\times\H)\times\H)=(\nabla\times\H)\times\H\cdot\u+\nabla\times(\u\times\H)\cdot\H,
\end{equation}
where $\Psi:\nabla\u$ denotes the scalar product of two matrices (see \eqref{matrix}).
Multiplying equation \eqref{11} by $(\r P_e(\r))'$ yields
\begin{equation}\label{114}
\partial_t(\r P_e(\r))+\Dv(\r P_e(\r)\u)+p_e(\r)\Dv\u=0,
\end{equation}
and subtracting this equality from \eqref{112}, we get the following
thermal energy equation:
\begin{equation}\label{115}
\partial_t (\r Q(\t))+\Dv(\r Q(\t)\u)-\Dv(\kappa(\t)\nabla\t)=\nu|\nabla\times\H|^2+\Psi:\nabla\u-\t
p_{\t}(\r)\Dv\u.
\end{equation}

We note that in \cite{f6}, Ducomet and Feireisl studied, using the
entropy method, the full compressible MHD equations with an
additional {Poisson's} equation under the assumption that the
viscosity coefficients depend on the temperature and the magnetic
field, and the pressure behaves like the power law $\r^\gamma$
with $\gamma=\f{5}{3}$ for large density. We also remark that, for
the mathematical analysis of incompressible MHD equations, we
refer the reader to the work \cite{gb} and the references cited
therein; and for the related studies on the multi-dimensional
compressible Navier-Stokes equations, we refer to \cite{f1, f2,
hoff97,p2} and particularly \cite{f1, f2} for the nonisentropic
case. In this paper, we consider compressible MHD flow with more
general pressure, and use the thermal equation \eqref{115} as in
\cite{f1} instead of the entropy equation used in \cite{f6}, thus
the methods of this paper differ significantly from those in
\cite{f6}. There are several major difficulties in studying the
global solutions of the initial-boundary value problem of
\eqref{1}-\eqref{17} with large data, due to the interaction from
the magnetic field, large oscillations and concentrations of
solutions, and poor \textit{a priori} estimates available for MHD.
To deal with the possible density oscillation, we use the weak
continuity property of the effective viscous flux , first
established by Lions \cite{p2} for the barotropic compressible
Navier-Stokes system with constant viscosities (see also Feireisl
\cite{f2} and Hoff \cite{hoff95}). More precisely, for fixed
$T>0$, assuming
\begin{equation*}
\begin{cases}
(\r_n, b(\r_n), p_n)\rightarrow (\r, \overline{b(\r)}, \overline{p})
\textrm{  weakly in }  L^1(\O\times(0,T)),\\
(\u_n, \H_n) \rightarrow (\u, \H) \textrm{  weakly in } L^2([0,T];W_0^{1,2}(\O)),
\end{cases}
\end{equation*}
we will prove that, for some function $b$,
\begin{equation*}
\big(p_n-(\lambda(\t_n)+2\mu(\t_n))\Dv\u_n\big)b(\r_n)\rightarrow\big(\overline{p}-
(\overline{\lambda(\t)}+2\overline{\mu(\t)})\Dv\u\big)b(\r)
\end{equation*}
weakly in $L^1(\O\times(0,T))$, where $\overline{f}$ denote a weak
limit of a sequence $\{f_n\}_{n=1}^\infty$ in
$L^1(\O\times(0,T))$. To overcome the difficulty from the
concentration in the temperature in order to pass to limit in
approximation solutions, we use the renormalization of the thermal
energy equation \eqref{115}. More precisely, multiplying
\eqref{115} by $h(\t)$ for some function $h$, we obtain,
\begin{equation}\label{117}
\begin{split}
&\partial_t (\r Q_h(\t))+\Dv(\r Q_h(\t)\u)-\Delta K_h(\t)\\&=
\nu|\nabla\times\H|^2h(\t)+h(\t)\Psi:\nabla\u-h(\t)\t p_{\t}(\r)\Dv\u
-h'(\t)\kappa(\t)|\nabla \t|^2,
\end{split}
\end{equation}
where
$$Q_h(\t)=\int_0^{\t}c_\upsilon(\xi)h(\xi)d\xi, \quad K_h(\t)=\int_0^{\t}\kappa(\xi)h(\xi)d\xi.$$
The idea of renormalization was used in  Feireisl \cite{f1,f2}, and is similar
to that in DiPerna and Lions \cite{dl}.
In addition, we also need to overcome the difficulty arising from
the presence of the magnetic field and its coupling and interaction with the fluid variables.

We organize the rest of this paper as follows. In Section 2, we
introduce a variational formulation of the full compressible MHD
equations, and also state the main existence result
(Theorem \ref{mt}). In Section 3, we will formally derive a series
of \textit{a priori} estimates on the solution. In order to
construct a sequence of approximation solutions, a three-level
approximation scheme from \cite{hw1} for isentropic MHD flow  will be  adopted in
Section 4. Finally, in Section 5, our main result will be proved
through a vanishing viscosity and vanishing artificial pressure
limit passage using the weak convergence method.

\bigskip

\section{Variational Formulation and Main Result}

In this section, we give the definition of the variational solution
to the initial-boundary value problem \eqref{1}-\eqref{17} and state the main result.

First we remark that, as shown later, the optimal estimates we can
expect on the magnetic field $\H$ and the velocity $\u$ are in
$H^1$-norms, which can not ensure the convergence of the terms
$|\nabla\times\H|^2$ and $\Psi:\nabla\u$ in $L^1$ of equation
\eqref{115}, or even worse, in the sense of distributions. In
other words, the compactness on the temperature does not seem to
be sufficient to pass to the limit in the thermal energy equation.
Thus, we will replace the thermal energy equality \eqref{115} by
two inequalities in the sense of distributions to be in accordance
with the second law of thermodynamics. More precisely, instead of
\eqref{115}, we only require that the following two inequalities
hold:
\begin{equation}\label{21}
\partial_t (\r Q(\t))+\Dv(\r Q(\t)\u)-\Delta K(\t)\geq\nu|\nabla
\times\H|^2+\Psi:\nabla\u-\t p_{\t}(\r)\Dv\u,
\end{equation}
in the sense of distributions, and
\begin{equation}\label{22}
E[\r,\u,,\t,\H](t)\leq E[\r,\u,\t,\H](0) \textrm{ for } t\geq 0,
\end{equation}
with the total energy
$$E[\r,\u,\t,\H]=\int_{\O}
\left(\r\left(P_e(\r)+Q(\t)+\f{1}{2}|\u|^2\right)+\f{1}{2}|\H|^2\right)dx,$$
and
$$K(\t)=\int_0^{\t}\kappa(\xi)d\xi.$$

Now we  give the definition of variational solutions to the full
MHD equations as follows.

\begin{Definition}\label{d1}
A vector $(\r, \u, \t, \H)$ is said to be a variational solution to
the initial-boundary value problem \eqref{1}-\eqref{17} of
the full compressible MHD equations on the time interval $(0,T)$ for any fixed $T>0$ if
the following conditions hold:
\begin{itemize} 
\item The density $\r\ge 0$, the  velocity $\u\in L^2([0,T];
W_0^{1,2}(\O))$, and the magnetic field $\H\in L^2([0,T];
W_0^{1,2}(\O))\cap C([0,T]; L_{weak}^2(\O))$ satisfy the equations
\eqref{11}, \eqref{12}, and  \eqref{14} in the sense of
distributions, and
$$\int_0^T\!\!\!\!\int_{\O}
\left(\r\partial_t\varphi+\r\u\cdot\nabla\varphi\right) dxdt=0,$$
for any $\varphi\in C^\infty(\O\times[0,T])$ with $\varphi(x,0)=\varphi(x,T)=0$ for $x\in\O$;

\item The temperature $\t$ is a non-negative function satisfying
\begin{equation*}
\begin{split}
&\int_0^T\!\!\!\!\int_{\O}\left(\r Q(\t)\partial_t\varphi+\r Q(\t)\u\cdot\nabla\varphi+K(\t)\Delta\varphi\right)dxdt\\
&\leq \int_0^T\!\!\!\!\int_{\O}(\t
p_{\t}(\r)\Dv\u-\nu|\nabla\times\H|^2-\Psi:\nabla\u)\varphi
\,\,dxdt
\end{split}
\end{equation*}
for any $\varphi\in C_0^\infty(\O\times(0,T))$ with $\varphi\geq 0$;

\item The energy inequality \eqref{22} holds for a.e $t\in(0,T)$, with
$$E[\r,\u,\t,\H](0)=\int_{\O}\left(\r_0P_e(\r_0)+\r_0Q(\t_0)+\f{1}{2}\f{|\m_0|^2}{\r_0}+\f{1}{2}|\H_0|^2\right)\,dx;$$

\item The functions $\r$, $\r\u$, and $\H$ satisfy the initial
conditions in the following weak sense:
$$\mathrm{ess}\lim_{t\to 0^+}
\int_\O(\r, \r\u, \H)(x,t)\eta(x)\,dx =\int_\O(\r_0, m_0,
\H_0)\eta\,dx,$$ 
for any $\eta\in \mathcal{D}(\O):= C_0^\infty(\O)$.
\end{itemize}
\end{Definition}

Now we are ready to state the main result of this paper.
\begin{Theorem}\label{mt}
Let $\O\subset \R^3$ be a bounded domain of class $C^{2+\tau}$ for
some $\tau>0$. Suppose that the following conditions hold:
the pressure $p$ is given by the equation \eqref{18} where  $p_e$, $p_{\t}$ are $C^1$
functions on $[0, \infty)$ and
\begin{equation}\label{23}
\begin{cases}
p_e(0)=0, \quad p_{\t}(0)=0,\\
p_e'(\r)\geq a_1\r^{\gamma-1}, \quad p'_{\t}(\r)\geq 0 \quad \textrm{for all }\r>0,\\
p_e(\r)\leq a_2\r^\gamma, \quad p_{\t}(\r)\leq a_3(1+\r^{\frac{\gamma}3}) \quad \textrm{for all } \r\geq 0, 
\end{cases}
\end{equation}
with some constants $\gamma>\f{3}{2}$,
 $a_1>0$, $a_2> 0$, and $a_3>0$;
 $\kappa=\kappa(\t)$ is a $C^1$ function on $[0,\infty)$ such that
\begin{equation}\label{25}
\underline{\kappa}(1+\t^\alpha)\leq \kappa(\t)\leq \overline{\kappa}(1+\t^\alpha),
\end{equation}
for some constants $\alpha> 2$, $\underline{\kappa}>0$, and $\overline{\kappa}>0$;
the viscosity coefficients $\mu$ and $\lambda$ are $C^1$  functions of $\t$  and globally Lipschitz on $[0,\infty)$ satisfying
\begin{equation} \label{18b}
0<\underline{\mu}\leq\mu(\t)\leq\overline{\mu},\quad
0\leq\lambda(\t)\leq\overline{\lambda},
\end{equation}
for some positive constants $\underline{\mu}$, $\overline{\mu}$,
$\overline{\lambda}$; $\nu>0$ is a constant; there exist two
positive constants $\underline{c_\upsilon}$,
$\overline{c_\upsilon}$ such that
\begin{equation}\label{26}
0<\underline{c_\upsilon}\leq c_\upsilon(\t)\leq\overline{c_\upsilon};
\end{equation}
and finally,  the initial data satisfy
\begin{equation} \label{27}
\begin{cases}
\r_0\in L^{\gamma}(\O), \quad \r_0\geq 0   \textrm{ on } \O,  \\
\t_0\in L^{\infty}(\O), \quad \t_0\geq \underline{\t}>0 \textrm{ on } \O,\\
\f{|m_0|^2}{\r_0}\in L^1(\O),\\
\H_0\in L^2(\O), \quad \Dv \H_0=0 \text{ in } \mathcal{D}'(\O).
\end{cases}
\end{equation}
Then, the initial-boundary value problem \eqref{1}-\eqref{17} of
the full compressible MHD equations has a
variational solution $(\r, \u, \t, \H)$ on $\O\times(0,T)$ for any
given $T>0$, and
\begin{equation} \label{28}
\begin{cases}
\r\in L^{\infty}([0,T]; L^{\gamma}(\O))\cap C([0,T]; L^1(\O)),\\
\u\in L^2([0,T];W_0^{1,2}(\O)), \quad \r\u\in C([0,T]; L_{weak}^{\f{2\gamma}{\gamma+1}}(\O)),\\
\t\in L^{\alpha+1}(\O\times(0,T)), \quad\r Q(\t)\in L^{\infty}([0,T]; L^1(\O)),\\
\t p_{\t}\in L^2(\O\times(0,T)), \quad\r Q(\t)\u\in L^1(\O\times(0,T)),\\
\ln(1+\t)\in L^2([0,T]; W^{1,2}(\O)), \quad\t^{\f{\alpha}{2}}\in L^2([0,T]; W^{1,2}(\O)),\\
\H\in L^2([0,T];W_0^{1,2}(\O))\cap C([0,T]; L^2_{weak}(\O)).
\end{cases}
\end{equation}
\end{Theorem}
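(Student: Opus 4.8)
The plan is to follow the weak convergence method of Lions and Feireisl for the compressible Navier--Stokes--Fourier system, adapting it to accommodate the magnetic field. First I would construct a family of approximate solutions by means of the three-level approximation scheme borrowed from \cite{hw1}: a Faedo--Galerkin discretization for the velocity $\u$ and the magnetic field $\H$ on finite-dimensional spaces, an artificial viscosity term $\varepsilon\Delta\r$ added to the continuity equation \eqref{11} (making it a parabolic equation with smooth solutions), and an artificial pressure $\delta\r^\beta$ with $\beta$ large added to $p$ to force extra integrability on the density. At this regularized level the system is semilinear, and one obtains smooth approximate solutions by a fixed-point argument combined with parabolic regularity for $\r$, $\t$ and $\H$; crucially, at this stage the thermal energy equation \eqref{115} is solved as a genuine equality.

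Next I would derive the uniform \textit{a priori} estimates. The energy inequality \eqref{22} controls $\r|\u|^2$, $\r P_e(\r)$, $\r Q(\t)$ and $|\H|^2$ in $L^\infty([0,T];L^1(\O))$, while the associated dissipation gives $\u,\H\in L^2([0,T];W_0^{1,2}(\O))$ together with a bound on $\kappa(\t)|\na\t|^2/\t$. From the renormalized thermal energy equation \eqref{117}, with suitable choices $h(\t)=(1+\t)^{-\omega}$, one extracts the gradient estimates $\ln(1+\t),\ \t^{\alpha/2}\in L^2([0,T];W^{1,2}(\O))$ listed in \eqref{28}, hence compactness of the temperature. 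The higher integrability of the density is obtained by testing the momentum equation \eqref{12} with the Bogovskii operator inverting the divergence applied to a power of $\r$.

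Then I would pass to the limit through the three levels successively: the Galerkin limit $n\to\infty$, the vanishing artificial viscosity limit $\varepsilon\to 0$, and the vanishing artificial pressure limit $\delta\to 0$. At each stage the strong compactness of $\u$, $\H$ and $\t$ follows from the above estimates together with the Aubin--Lions lemma applied to the momentum, induction \eqref{14} and thermal energy equations, while the nonlinear pressure demands control of the density oscillations. The decisive tool is the weak continuity of the effective viscous flux $p-(\l(\t)+2\mu(\t))\Dv\u$ announced in the introduction: testing the momentum equation against $\na\Delta^{-1}[b(\r_n)]$ and against $\na\Delta^{-1}[b(\r)]$ and subtracting, the commutator structure of the Riesz operators yields that $\big(p_n-(\l(\t_n)+2\mu(\t_n))\Dv\u_n\big)b(\r_n)$ converges weakly to the product of the corresponding weak limits. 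Combined with the renormalized continuity equation in the DiPerna--Lions sense, this produces strong $L^1$ convergence of $\r_n$ and identifies the limit pressure.

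The hard part will be the interaction of the magnetic field with the hydrodynamic variables under simultaneous control of the temperature. One must show that the Lorentz force $(\na\times\H)\times\H$ and the magnetic dissipation $\nu|\na\times\H|^2$ survive the limit: the former requires strong $L^2$ compactness of $\H$ from the induction equation, whereas the latter appears on the right side of the thermal energy balance, where only weak lower semicontinuity of $|\na\times\H|^2$ and $\Psi:\na\u$ is available. This is precisely why the equality \eqref{115} must be relaxed to the inequality \eqref{21}, in accordance with the second law. The temperature dependence of $\mu(\t)$ and $\l(\t)$ further complicates the effective viscous flux argument, since the coefficient itself must be carried through its own weak limit $\ov{\l(\t)}+2\ov{\mu(\t)}$; reconciling this with the density compactness and verifying that the entropy-type inequality \eqref{21} holds in the limit constitutes the technical core of the proof.
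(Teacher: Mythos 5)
Your proposal follows essentially the same route as the paper: the three-level approximation scheme of \cite{hw1} (Faedo--Galerkin, artificial viscosity $\varepsilon\Delta\r$, artificial pressure $\delta\r^\beta$), the energy and renormalized thermal-energy estimates with $h(\t)=(1+\t)^{-\omega}$, Bogovskii-type refined pressure estimates, the effective viscous flux argument carried out with the temperature-dependent coefficient $\ov{\l(\t)}+2\ov{\mu(\t)}$ via Riesz-operator commutators and the renormalized continuity equation, and the relaxation of the thermal equality \eqref{115} to the inequality \eqref{21} precisely because only weak lower semicontinuity of $\nu|\na\times\H|^2+\Psi:\na\u$ is available. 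The paper's execution adds technical ingredients you did not spell out (a Neumann-problem test function for the $L^{\alpha+1}$ temperature bound, an Orlicz-space argument giving $\dl\r_\dl^\beta\to 0$, and a monotonicity argument combined with a variant of the Aubin--Lions lemma for strong convergence of $\t_\dl$), but these sit inside the same overall strategy you describe.
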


\medskip
\begin{Remark}
In addition, the solution constructed in Theorem \ref{mt} will
satisfy the continuity equation in the sense of renormalized
solutions, that is, the integral identity
$$\int_0^T\!\!\!\!\int_{\O}\left(b(\r)\partial_t\varphi+b(\r)\u
\cdot\nabla\varphi+(b(\r)-b'(\r)\r)\Dv\u\,\varphi\right)dxdt=0$$
holds for any
$$b\in C^1[0,\infty),\quad |b'(z)z| \leq cz^{\f{\gamma}{2}} \textrm{ for z larger than some
positive } z_0.$$
and any test function
$\varphi\in C^\infty([0,T]\times\ov{\O})$ with  $\varphi(x,0)=\varphi(x,T)=0$
for $x\in\O$.
\end{Remark}

\begin{Remark}
The growth restrictions imposed on $\kappa, \mu, \lambda$, and
$c_\upsilon$ may  not be optimal, and $\gamma>\f{3}{2}$ is a
necessary condition to ensure the convergence of nonlinear term
$\r\u\otimes\u$ in the sense of distributions. In particular, our
result includes the case of constant viscosity coefficients, with the assumption
that the coefficient $\lambda\geq 0$.
\end{Remark}

\begin{Remark}
Our method also works for the case with nonzero
external force $f$ in the momentum equation. As it is obvious that in our analysis the
presence of the external force  does not
add any additional difficulty, and usually can be dealt with by
using classical Young's inequality under suitable assumptions on the
integrability of the external force $f$.
\end{Remark}

\bigskip

\section{A Priori Estimates}

To prove Theorem \ref{mt}, we first need to obtain sufficient \textit{a
priori} estimates on the solution. The total energy
conservation \eqref{22} implies
\begin{equation}\label{31}
\begin{split}
&\int_{\O}\left(\r\left(P_e(\r)+Q(\t)+\f{1}{2}|\u|^2\right)+\f{1}{2}|\H|^2\right)dx\\
&\leq\int_{\O}\left(\r_0P_e(\r_0)+\r_0Q(\t_0)+\f{1}{2}\f{|\m_0|^2}{\r_0}+\f{1}{2}|\H_0|^2\right)\,dx.
\end{split}
\end{equation}
But the assumption \eqref{23} implies that there is a positive
constant c such that
$$\r P_e(\r)\geq c\r^\gamma, \textrm{ for any } \r\geq 0.$$
Thus, \eqref{31} implies that $\r^\gamma, \r Q(\t), \f{1}{2}\r|\u|^2$ and
$\f{1}{2}|\H|^2$ are bounded in $L^\infty([0,T]; L^1(\O))$. Hence,
$$\r\in L^\infty([0,T]; L^\gamma(\O)),\quad
\r\u\in L^\infty([0,T]; L^{\f{2\gamma}{\gamma+1}}(\O)).$$

Next, in order to obtain estimates on the temperature, we introduce the entropy
$$s(\r,\t)=\int_1^{\t}\f{c_{\upsilon}(\xi)}{\xi}d\xi-P_{\t}(\r),
\quad\text{with } P_{\t}(\r)=\int_1^{\r}\f{p_{\t}(\xi)}{\xi^2}d\xi.$$
If the flow is smooth and the temperature is strictly positive, then
by direct calculation, using \eqref{11} and \eqref{115}, we obtain
\begin{equation}\label{32}
\partial_t(\r s)+\Dv(\r s\u)+\Dv\left(\f{q}{\t}\right)=
\f{1}{\t}\left(\nu|\nabla\times\H|^2+\Psi:\nabla\u\right)-\f{q\cdot\nabla\t}{\t^2}.
\end{equation}
Integrating \eqref{32}, we get
\begin{equation}\label{33}
\begin{split}
&\int_0^T\!\!\!\!\int_{\O}\left(\f{1}{\t}\left(\nu|\nabla\times\H|^2
+\Psi:\nabla\u\right)+\f{\kappa(\t)|\nabla\t|^2}{\t^2}\right)dxdt \\
&=\int_{\O}\r s(x,t)\,dx-\int_{\O}\r s(x,0)\,dx.
\end{split}
\end{equation}
On the other hand, assumptions \eqref{23} imply, using
Young's inequality,
\begin{equation}\label{34}
|\r P_{\t}(\r)|\leq c+\r P_{e}(\r) \quad\textrm{for some } c>0.
\end{equation}
Moreover, we have
\begin{equation}\label{35}
\r \int_1^{\t}\f{c_\upsilon(\xi)}{\xi}d\xi\leq \r Q(\t) \quad\textrm{for all }
\t> 0, \; \r\geq 0,
\end{equation}
since
$$\int_1^\t\f{c_\upsilon(\xi)}{\xi}d\xi \leq 0, \quad\textrm{if } 0<\t\leq 1,$$
and
$$\int_1^\t\f{c_\upsilon(\xi)}{\xi}d\xi \leq \int_1^\t c_\upsilon(\xi)d\xi=Q(\t)-Q(1)\leq Q(\t), \quad\textrm{if } \t>1.$$
Assuming that $\r s(\cdot,0)\in L^1(\O)$, then from \eqref{33}-\eqref{35} ,
using the assumption \eqref{25} and the estimates from \eqref{31},
we get
$$\int_0^T\!\!\!\!\int_{\O}|\nabla\t^{\f{\alpha}{2}}|^2+|\nabla \ln\t|^2\,dxdt\leq C,$$
which, combining the Sobolev's imbedding theorem, implies
\begin{equation}\label{36}
\ln\t \textrm{ and } \t^{\f{\alpha}{2}} \textrm{ are bounded in }
L^2([0,T]; W^{1,2}(\O)).
\end{equation}

Finally, we turn to the estimates on the velocity and the magnetic
field. Indeed, integrating \eqref{115} over $\O\times(0,T)$, we get
\begin{equation}\label{37}
\begin{split}
&\int_0^T\!\!\!\!\int_{\O}
\left(\Psi:\nabla\u+\nu|\nabla\times\H|^2\right)dxdt \\
&= \int_0^T\!\!\!\!\int_{\O}\t
p_{\t}(\r)\Dv\u\,dxdt+\int_{\O}\r Q(\t)(x,T)dx-\int_{\O}\r Q(\t)(x,0)\,dx.
\end{split}
\end{equation}
Noticing that, using H\"{o}lder inequality, one has
\begin{equation}\label{38}
\| \t p_{\t}(\r)\|_{L^2(\O)}\leq \|\t\|_{L^6(\O)}\| p_{\t}(\r)\|_{L^3(\O)}.
\end{equation}
Thus, from the assumption \eqref{23} and estimate \eqref{36}, we have
$$\t p_{\t}(\r)\in L^2(\O\times(0,T)).$$
The relation \eqref{37} together with \eqref{31}, \eqref{38}, gives rise to the estimate
$$\int_0^T\!\!\!\!\int_{\O}\left(\Psi:\nabla \u+\nu|\nabla\times\H|^2\right)dxdt\leq C(\r_0, \u_0, \t_0, \H_0).$$
The assumption \eqref{18b}, the fact
$\|\nabla\times\H\|_{L^2}=\|\nabla\H\|_{L^2}$ when $\Dv\H=0$, and
Sobolev's imbedding theorem give that
\begin{equation*}
\u, \H \textrm{ are bounded in } L^2([0,T]; W_0^{1,2}(\O)).
\end{equation*}

In summary, if $\r s(\cdot,0)\in L^1(\O)$, the system \eqref{11},
\eqref{12}, \eqref{14}, \eqref{115} with the initial-boundary
conditions \eqref{17} and our assumptions
\eqref{23}-\eqref{27} yield the following estimates:
\begin{equation}\label{310}
\begin{cases}
\r P_e(\r), \r Q(\t) \textrm{ are bounded in } L^\infty([0,T]; L^1(\O));\\
\r \textrm{ is bounded in } L^\infty([0,T]; L^\gamma(\O)), \\
\r\u\textrm{ is bounded in } L^\infty([0,T]; L^{\f{2\gamma}{\gamma+1}}(\O));\\
\ln\t \textrm{ and } \t^{\f{\alpha}{2}} \textrm{ are bounded in } L^2([0,T]; W^{1,2}(\O));\\
\u, \H \textrm{ are bounded in } L^2([0,T]; W_0^{1,2}(\O)).
\end{cases}
\end{equation}

\bigskip

\section{The Approximation Scheme and Approximation Solutions}

Similarly as Section 4 in \cite{hw1} and Section 3 in \cite{f1},
we introduce an approximate problem which consists of a system of
regularized equations:
\begin{equation}\label{41}
\begin{cases}
\rho_t +\Dv(\r\u)=\varepsilon\Delta\r, \\
(\r\u)_t+\Dv\left(\r\u\otimes\u\right)+\nabla
p(\r,\t)+\delta\nabla \r^\beta+\varepsilon\nabla\u\cdot\nabla\r
=(\na \times \H)\times \H+\Dv\Psi, \\
\begin{aligned}
\partial_t ((\r+\delta) Q(\t))+&\Dv(\r Q(\t)\u)-\Delta K(\t)+\delta\t^{\alpha+1}\\
&=(1-\delta)(\nu|\nabla\times\H|^2+\Psi:\nabla\u)-\t p_{\t}(\r)\Dv\u,
\end{aligned}\\
\H_t-\nabla\times(\u\times\H)=-\nabla\times(\nu\nabla\times\H),\quad
\Dv\H=0,
\end{cases}
\end{equation}
with the initial-boundary conditions
\begin{equation} \label{41b}
\begin{cases}
\nabla\r\cdot {\bf n}|_{\partial\O}=0, \quad \r|_{t=0}=\r_{0,\delta},\\
\u|_{\partial\O}=0, \quad \r\u|_{t=0}=m_{0,\delta},\\
\nabla\t|_{\partial\O}=0, \quad\t|_{t=0}=\t_{0,\delta},\\
\H|_{\partial\O}=0, \quad \H|_{t=0}=\H_0.
\end{cases}
\end{equation}
where $\varepsilon$ and $\delta$ are two positive parameters, $\beta>0$ is a fixed constant, and ${\bf n}$ is the unit outer normal of $\partial\O$.
The initial data are chosen in such a way that
\begin{equation}\label{42}
\begin{cases}
\r_{0,\delta}\in C^3(\overline{\O}), \quad 0<\delta\leq\r_{0,\delta}\leq\delta^{-\f{1}{2\beta}};\\
\r_{0,\delta}\rightarrow\r_0 \textrm{ in } L^\gamma(\O), \quad
|\{\r_{0,\delta}<\r_0\}|\rightarrow 0, \quad\text{as }\delta\to 0;\\
\delta\int_{\O}\r_{0,\dl}^\beta\,\,dx\rightarrow 0, \quad\text{as }\delta\to 0;\\
m_{0,\delta}=\begin{cases}m_0, &\textrm{ if } \r_{0,\delta}\geq \r_0,\\
0, &\textrm{ if } \r_{0,\delta}< \r_0;
\end{cases}\\
\t_{0,\delta}\in C^3(\overline{\O}), \quad 0<\underline{\t}\leq\t_{0,\delta}\leq\overline{\t};\\
\t_{0,\delta}\rightarrow\t_0 \textrm{ in } L^1(\O) \quad\text{as }\delta\to 0.
\end{cases}
\end{equation}

Noticing that the terms $\nu|\nabla\times\H|^2$ and $\Psi:\nabla\u$
are nonnegative, and $\t=0$ is a subsolution of the third equation in
\eqref{41}, we can conclude that, using the maximum principle,
$\t(t,x)\geq 0$ for all $t\in(0,T)$ and $x\in\O$.

From Lemma 3.2 in \cite{hw1} and Proposition 7.2 in \cite{f2}, we see that
the approximate problem \eqref{41}-\eqref{41b} with fixed positive parameters $\varepsilon$
and $\delta$ can be solved by means of a modified Faedo-Galerkin
method (cf. Chapter 7 in \cite{f2}). Thus, we state  without proof
 the following result (cf. Proposition 3.1 in \cite{f1}):

\begin{Proposition}\label{p1}
Under the hypotheses of Theorem ~\ref{mt}, and let $\beta$ be large
enough, then  the approximate problem \eqref{41}-\eqref{41b} has a solution
 $(\r, \u, \t, \H)$ on $\O\times(0,T)$ for any fixed $T>0$ satisfying the following
 properties:
\begin{itemize}
\item $\r\ge 0$, $\u\in L^2([0,T];W_0^{1,2}(\O))$, $\H\in L^2([0,T];W_0^{1,2}(\O))$,
the first equation in \eqref{41} is satisfied a.e
on $\O\times(0,T)$, the second and fourth equations in \eqref{41} are satisfied in the sense of distributions on $\O\times(0,T)$ (denoted by $\mathcal{D}'(\O\times(0,T)$),
$\u$ and $\H$ are bounded in $L^2([0,T]; W^{1,2}_0(\O))$, and, for some $r>1$,
$$\r_t, \,\Delta\r\in L^r(\O\times(0,T)), \quad
\r\u\in C([0,T]; L_{weak}^{\f{2\gamma}{\gamma+1}}(\O)),$$
\begin{equation}\label{412}
\delta\int_0^T\!\!\!\!\int_{\O}\r^{\beta+1}\,dxdt\leq c_1(\varepsilon,\delta),\quad
\varepsilon\int_0^T\!\!\!\!\int_{\O}|\nabla\r|^2\,dxdt\leq c_2,
\end{equation}
where $c_2$ is a constant independent of $\varepsilon$.

\item The energy inequality
\begin{equation*}
\begin{split}
&\int_0^T\!\!\!\!\int_{\O}(-\psi_t)\left(\f{1}{2}\r|\u|^2+\f{1}{2}|\H|^2+
\f{a_2}{\gamma-1}\r^\gamma+\f{\delta}{\beta-1}\r^\beta+(\r+\delta)Q(\t)\right)dxdt\\
&\quad +\delta\int_0^T\!\!\!\!\int_{\O}
 \psi\left(\Psi:\nabla\u+\nu|\nabla\times\u|^2+\t^{\alpha+1}\right)dxdt\\
&\leq\int_{\O}\left(\f{1}{2}\f{|m_{0,\delta}|^2}{\r_{0,\delta}}+\f{1}{2}|\H_0|^2+ \f{a_2}
{\gamma-1}\r_{0,\delta}^\gamma
+\f{\delta}{\beta-1}\r_{0,\delta}^\beta+(\r_{0,\delta}+\delta)Q(\t_{0,\delta})\right)dx\\
&\quad +\int_0^T\!\!\!\!\int_{\O}\psi p_b(\r)\Dv\u\,dxdt
\end{split}
\end{equation*}
holds for any $\psi\in C^\infty([0,T])$ satisfying
$$\psi(\cdot,0)=1, \quad\psi(\cdot,T)=0, \quad\psi_t\leq 0 \text{ on } \O,$$
where, $p_e(\r)$ has been decomposed as
$$p_e(\r)=a_2\r^\gamma-p_b(\r),$$
with $p_b\in C^1[0,\infty), \, p_b\geq 0$;

\item The temperature $\t\ge 0$ satisfies that
$$\t\in L^{\alpha+1}(\O\times(0,T)), \quad\t^{\f{\alpha}{2}}\in L^2([0,T];W^{1,2}(\O)),$$
and the thermal energy inequality holds in the following renormalized sense:
\begin{equation}\label{43}
\begin{split}
&\int_0^T\!\!\!\!\int_{\O}\left((\r+\delta)Q_h(\t)\partial_t\varphi+\r Q_h(\t)\u
\cdot\nabla\varphi+K_h(\t)\Delta\varphi-\delta h(\t)\t^{\alpha+1}\varphi\right)dxdt\\
&\leq\int_0^T\!\!\!\!\int_{\O}\left((\delta-1)h(\t)(\Psi:\nabla\u+
\nu|\nabla\times\H|^2)+h'(\t)\kappa(\t)|\nabla\t|^2\right)\varphi
\,dxdt\\
&\quad +\int_0^T\!\!\!\!\int_{\O}h(\t)\t p_{\t}(\r)\Dv\u\varphi\,dxdt-
\int_{\O}(\r_{0,\delta}+\delta)Q_h(\t_{0,\delta})\varphi(x,0)\,dx\\
&\quad +\varepsilon\int_0^T\!\!\!\!\int_{\O}\nabla\r\cdot
 \nabla\big((Q_h(\t)-Q(\t)h(\t))\varphi\big)dxdt,
\end{split}
\end{equation}
for any  function $h\in C^\infty(\R^+)$ satisfying
\begin{equation}\label{44}
\begin{split}
h(0)>0,\quad &h \textrm{ non-increasing on } [0,\infty),
\quad \lim_{\xi\rightarrow\infty}h(\xi)=0,\\ &h''(\xi)h(\xi)\geq 2(h'(\xi))^2,
\textrm{ for all } \xi\geq 0,
\end{split}
\end{equation}
and any test function $\varphi\in C^2(\O\times[0,T])$ satisfying
$$\varphi\geq 0,\quad \varphi(\cdot,T)=0,
\quad \nabla\varphi\cdot {\bf n}|_{\partial\O}=0.$$
\end{itemize}
\end{Proposition}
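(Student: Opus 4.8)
The plan is to construct a solution of the approximate problem \eqref{41}--\eqref{41b} by a Faedo--Galerkin scheme, in the spirit of Feireisl \cite{f1,f2} and its adaptation to the isentropic MHD case in Hu--Wang \cite{hw1}. Fix the regularization parameters $\varepsilon,\delta>0$ and $\beta$ large, and let $X_n=\mathrm{span}\{\mathbf{w}_i\}_{i=1}^n\subset W_0^{1,2}(\O)$ be a finite-dimensional space of smooth vector fields (for instance, the first $n$ eigenfunctions of the associated elliptic operator). I seek the velocity as $\u_n\in C([0,T];X_n)$, while the density, magnetic field, and temperature are recovered from $\u_n$ by solving parabolic subproblems. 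For given $\u\in C([0,T];X_n)$: the first equation of \eqref{41} is a linear uniformly parabolic equation for $\r$ with the Neumann condition of \eqref{41b}, so classical parabolic theory yields a unique smooth $\r=\r[\u]$, and the comparison principle gives $0<\un\r(\u)\le\r\le\ov\r(\u)$ together with $\varepsilon\int_0^T\!\int_\O|\na\r|^2\le c_2$ (by testing with $\r$); the fourth equation reduces, via $\Dv\H=0$ and $\nu$ constant, to the linear parabolic system $\H_t-\na\times(\u\times\H)=\nu\Delta\H$, whose divergence solves a homogeneous heat problem and hence stays zero, giving $\H=\H[\u]$; and the third equation is a quasilinear parabolic equation for $\t$ with source built from $\r[\u],\H[\u],\u$, solvable for $\t=\t[\u]\ge0$, the nonnegativity following from the maximum principle as observed just before the Proposition. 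Each of these solution operators depends continuously on $\u$ in suitable norms.

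Next I close the momentum equation. Testing the second equation of \eqref{41} against $\mathbf{w}_i$ and inserting $\r[\u_n],\H[\u_n],\t[\u_n]$ gives an ODE system for the coefficients $g(t)$ of $\u_n$, of the form $\f{d}{dt}\,M[\r]g=N[g]$, where the mass matrix $M_{ij}[\r]=\int_\O\r\,\mathbf{w}_i\cdot\mathbf{w}_j\,dx$ is positive definite because $\r\ge\un\r>0$. Inverting $M[\r]$ and using the continuous dependence of the parabolic solves on $\u$, a fixed-point argument produces a unique solution $\u_n$ on a short interval $[0,T_n]$. To reach $T_n=T$ I derive the Galerkin energy identity: test the momentum equation with $\u_n$, the induction equation with $\H$, and combine with the continuity and temperature equations exactly as in the formal derivation \eqref{111}--\eqref{115}, then integrate. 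This produces bounds uniform in $t$ and $n$ of the type \eqref{31} and \eqref{310}, together with the artificial-pressure estimate $\delta\int_0^T\!\int_\O\r^{\beta+1}\le c_1(\varepsilon,\delta)$ coming from the extra term $\delta\na\r^\beta$; these exclude blow-up of $|g(t)|$, so the local solution is global on $[0,T]$.

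With the uniform estimates in hand I pass to the limit $n\to\infty$. The decisive compactness is the strong convergence of $\r_n$: the term $\varepsilon\Delta\r$ bounds $\r_n$ in $L^2([0,T];W^{1,2}(\O))$ and $\partial_t\r_n$ in $L^r$, so Aubin--Lions gives $\r_n\to\r$ strongly, which transfers the nonlinear terms of the continuity and momentum equations to the limit and recovers \eqref{412}; parabolic smoothing likewise furnishes enough compactness of $\H_n$ and $\t_n$ to pass to the limit in the diffusive terms. The energy inequality of the Proposition then follows by multiplying the energy identity by a time weight $\psi$ with $\psi_t\le0$ and using the weak lower semicontinuity of the dissipation $\int\psi(\Psi:\na\u+\nu|\na\times\H|^2)$, which is precisely why one obtains ``$\le$'' rather than equality. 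For the renormalized thermal inequality \eqref{43} I multiply the temperature equation by $h(\t)\varphi$ and integrate by parts: the term $-\Delta K(\t)$ generates $K_h(\t)\Delta\varphi$ and $h'(\t)\kappa(\t)|\na\t|^2$, while the time-derivative term, rewritten through the continuity equation, produces the convective term in $Q_h$ and the correction $\varepsilon\na\r\cdot\na\big((Q_h(\t)-Q(\t)h(\t))\varphi\big)$, thus reproducing \eqref{117} at the approximate level. The structural hypotheses \eqref{44}, in particular $h''h\ge2(h')^2$, are exactly what forces the renormalization defect produced by the nonlinear diffusion to carry the correct sign, so that once the weakly lower semicontinuous quadratic sources $\Psi:\na\u_n$ and $\nu|\na\times\H_n|^2$ are passed to the limit, the equality degrades into the stated inequality.

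The main obstacle is the joint treatment of the temperature equation and the magnetic coupling in the limit. Because the right-hand side of the temperature equation contains the quadratic quantities $\Psi:\na\u_n$ and $\nu|\na\times\H_n|^2$, which converge only weakly, one cannot pass to the limit in the temperature balance as an equality; this is the entire reason for replacing it by the renormalized inequality \eqref{43} and for the sign condition \eqref{44}. Verifying that the $\varepsilon$-regularizing terms converge as required, and tracking the precise sign of the renormalization defect, is the most delicate point. A secondary difficulty is ensuring that the strict positivity of $\r$, the nonnegativity of $\t$, and the constraint $\Dv\H=0$ all survive the passage to the limit.
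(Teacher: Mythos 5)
Your proposal is correct in outline and follows essentially the same route as the paper, which in fact states Proposition \ref{p1} \emph{without proof}, deferring to exactly this modified Faedo--Galerkin construction (Lemma 3.2 in \cite{hw1}, Proposition 7.2 in \cite{f2}, and Proposition 3.1 in \cite{f1}): finite-dimensional velocity space, parabolic subproblem solves for $\r$, $\H$, $\t$, a fixed-point argument, energy estimates, and passage to the limit with weak lower semicontinuity yielding the energy and renormalized thermal inequalities. The only caveat worth noting is that the condition $h''h\ge 2(h')^2$ in \eqref{44} is imposed to make $(\t,\nabla\u)\mapsto h(\t)\Psi:\nabla\u$ jointly convex (hence weakly lower semicontinuous, cf.\ Lemma \ref{l41}), rather than to control a defect from the diffusion term, and the $\varepsilon$-independent bound $\varepsilon\int_0^T\!\!\int_\O|\nabla\r|^2\,dxdt\le c_2$ requires the uniform $L^\beta$ bound on $\r$ from the artificial pressure, not merely testing the subproblem with $\r$ for fixed $\u$.
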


\begin{Remark}\label{r1}
In fact, we have
$$\|\nabla\t^{\f{\alpha}{2}}\|_{L^2(\O\times(0,T))}\leq c(\delta).$$
\end{Remark}

Comparing with the term $\nu|\nabla\times\H|^2$ in \eqref{43}, we
need to pay more attention to the term $\Psi:\nabla\u$, because the
later involves temperature-dependent coefficients and thus can not
be dealt with by the standard weak lower semi-continuity.
 Indeed, the hypothesis \eqref{44} was imposed in
\cite{f2} in order to make the function
$$(\t,\nabla\u)\mapsto h(\t)\Psi:\nabla\u$$
convex, and, consequently, weakly lower semi-continuous (the stress
tensor $\Psi$ in \cite{f2} depends on $\nabla\u$ only). In
accordance with our \textit{new} context, the following lemma is
useful:

\begin{Lemma}\label{l41}
Let $g(\t)$ be a bounded, continuous and non-negative function from
$[0,\infty)$ to $\R$. Suppose that $\t_n$ and $\u_n$ are two sequences of functions defined on $\O$ and
$$\t_n\rightarrow \t \textrm{ a.e in } \O,$$
and
$$\u_n\rightarrow\u \textrm{ weakly in } W^{1,2}(\O).$$
 Then,
\begin{equation}\label{45}
\int_{\O}g(\t)h(\t)|\nabla\u|^2\,dx\leq
\liminf_{n\rightarrow\infty}\int_{\O}g(\t_n)h(\t_n)|\nabla\u_n|^2\,dx.
\end{equation}
In particular,
\begin{equation}\label{46}
\int_{\O}h(\t)\Psi:\nabla\u\,dx\leq
\liminf_{n\rightarrow\infty}\int_{\O}h(\t_n)\Psi(\u_n):\nabla\u_n\,dx.
\end{equation}
\end{Lemma}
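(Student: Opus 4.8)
The plan is to reduce the whole statement to the elementary convexity of the map $p\mapsto|p|^2$, and to absorb the temperature–dependent weight through the almost–everywhere convergence $\t_n\to\t$. First I would set $w_n:=g(\t_n)h(\t_n)$ and $w:=g(\t)h(\t)$. Since $g$ is bounded and continuous and, by \eqref{44}, $h$ is continuous with $0\le h\le h(0)$, the weights obey $0\le w_n\le C$ for a fixed constant $C$, while continuity of $g,h$ and $\t_n\to\t$ a.e. give $w_n\to w$ a.e. in $\O$. Also $\na\u_n\rightharpoonup\na\u$ weakly in $L^2(\O)$, since $\u_n\rightharpoonup\u$ in $W^{1,2}(\O)$.

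The key pointwise bound is the supporting–hyperplane inequality for $|\cdot|^2$,
$$|\na\u_n|^2\ge|\na\u|^2+2\,\na\u:(\na\u_n-\na\u),$$
which, after multiplication by $w_n\ge 0$ and integration, yields
$$\int_\O w_n|\na\u_n|^2\,dx\ge\int_\O w_n|\na\u|^2\,dx+2\int_\O w_n\,\na\u:(\na\u_n-\na\u)\,dx.$$
I would then pass to the limit term by term. For the first term on the right, $w_n|\na\u|^2\to w|\na\u|^2$ a.e. with domination by $C|\na\u|^2\in L^1(\O)$, so dominated convergence gives $\int_\O w_n|\na\u|^2\to\int_\O w|\na\u|^2$. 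For the second term the point is that $w_n\na\u\to w\na\u$ \emph{strongly} in $L^2(\O)$: indeed $|w_n\na\u-w\na\u|^2=|w_n-w|^2|\na\u|^2\to 0$ a.e. and is dominated by $(2C)^2|\na\u|^2\in L^1(\O)$. Pairing this strong convergence against the weak convergence $\na\u_n-\na\u\rightharpoonup 0$ in $L^2(\O)$ forces $\int_\O w_n\,\na\u:(\na\u_n-\na\u)\,dx\to 0$. Taking $\liminf$ in the displayed inequality then produces \eqref{45}. This strong–weak pairing is the only delicate point, and it is precisely the a.e. convergence of $\t_n$ (not merely weak convergence) that upgrades $w_n\na\u$ to a strongly convergent sequence; in particular no part of the structural hypothesis \eqref{44} on $h$ is used beyond its boundedness and non–negativity.

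To obtain \eqref{46} I would decompose the dissipation as
$$\Psi(\u):\na\u=\frac{\mu(\t)}{2}\,|\na\u+\na\u^T|^2+\lambda(\t)\,(\Dv\u)^2,$$
both summands being non–negative convex quadratic forms in $\na\u$. The argument of the previous paragraph goes through verbatim with $|\na\u|^2$ replaced by any fixed convex quadratic form in $\na\u$: the supporting–hyperplane inequality and the vanishing of the tangent term use only convexity and the strong $L^2$ convergence of (weight)$\,\times\,$(gradient). Applying it once with weight $g=\mu$ and form $\frac{1}{2}|\na\u+\na\u^T|^2$, and once with $g=\lambda$ and form $(\Dv\u)^2$ — both $\mu(\t),\lambda(\t)$ being bounded, continuous and non–negative by \eqref{18b} — and then adding the two inequalities via the superadditivity $\liminf(a_n+b_n)\ge\liminf a_n+\liminf b_n$, yields \eqref{46}. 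The main residual obstacle is purely bookkeeping: the lemma is phrased for $|\na\u|^2$, so one must explicitly note that its proof is insensitive to replacing $|\na\u|^2$ by these two quadratic forms and that the $\t$–dependence enters only through the admissible weights $\mu(\t),\lambda(\t)$.
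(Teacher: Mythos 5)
Your proof is correct, and it takes a genuinely different route from the paper. The paper first shows $\sqrt{g(\t_n)}\,\nabla\u_n\rightharpoonup\sqrt{g(\t)}\,\nabla\u$ weakly in $L^2$ (by the same dominated-convergence/strong-times-weak device you use), and then invokes Corollary 2.2 of Feireisl's book applied to the \emph{jointly} convex function $\Phi(\t,\xi)=h(\t)\xi^2$, whose convexity is checked via the Hessian and rests precisely on the structural hypothesis \eqref{44}, namely $h''(\xi)h(\xi)\geq 2(h'(\xi))^2$; inequality \eqref{46} then follows from the decomposition \eqref{matrix}, exactly as in your last paragraph. You instead fold the whole weight $g(\t_n)h(\t_n)$ into a bounded, a.e.-convergent multiplier, use only convexity in the gradient variable through the supporting-hyperplane inequality, and dispose of the tangent term by pairing the strong $L^2$ convergence of $w_n\nabla\u$ against the weak convergence $\nabla\u_n-\nabla\u\rightharpoonup 0$. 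What your argument buys is that it is self-contained (no citation to an external lower-semicontinuity theorem) and it exposes that, \emph{under the a.e. convergence of $\t_n$ assumed in the lemma}, condition \eqref{44} plays no role beyond $0\le h\le h(0)$ and continuity; what the paper's heavier joint-convexity route buys is robustness — it is the tool that survives when only weak convergence of the temperature is available, which is the setting Feireisl's hypothesis \eqref{44} was originally designed for (as the paper's discussion preceding the lemma indicates). The only point worth making explicit in your write-up is the one you already flag as bookkeeping: the supporting-hyperplane inequality for a positive semi-definite quadratic form $q(p)=\langle Ap,p\rangle$ reads $q(p)\ge q(p_0)+\langle 2Ap_0,\,p-p_0\rangle$, with remainder $q(p-p_0)\ge 0$, and the tangent vector $2A\nabla\u$ is again a fixed $L^2$ matrix field, so the strong-weak pairing applies verbatim to both $\tfrac12|\nabla\u+\nabla\u^T|^2$ and $(\Dv\u)^2$.
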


\begin{proof}
First we show that $\sqrt{g(\t_n)}\nabla\u_n$ converges weakly to
$\sqrt{g(\t)}\nabla\u$ in $L^2$. Indeed, since $\sqrt{g(\t_n)}
\nabla\u_n$ is uniformly bounded in $L^2$, it is enough to show
\begin{equation}\label{47}
\int_{\O}\sqrt{g(\t_n)}\nabla\u_n
\phi\,dx\rightarrow\int_{\O}\sqrt{g(\t)}\nabla\u \,\phi\,dx, \textrm{
for all } \phi\in C^\infty(\O).
\end{equation}
Since $\t_n\rightarrow \t \textrm{ a.e in } \O$,
then $\sqrt{g(\t_n)}\phi\rightarrow \sqrt{g(\t)}\phi \textrm{ a.e in }
\O$ for all $\phi\in C^\infty(\O)$. Thus by Lebesgue's dominated
convergence theorem, we know that
$$\sqrt{g(\t_n)}\phi\rightarrow \sqrt{g(\t)}\phi \textrm{ in } L^2(\O),$$  and \eqref{47} follows.

Next, by virtue of Corollary 2.2 in \cite{f2}, it is enough to
observe that the function $\Phi: (\t,\xi)\mapsto h(\t)\xi^2$ is convex
and continuous on $\R^{+}\times\R$. Computing the Hessian
matrix of $\Phi$, we get
$$\textrm{det}\{\partial^2_{\t,\xi}\Phi\}=2\xi^2(h''(\t)h(\t)-2(h'(\t))^2)\geq 0,$$
and
$$\textrm{trace}\{\partial^2_{\t,\xi}\Phi\}=\xi^2h''(\t)+2h(\t)\geq 0,$$
provided $\t>0$ and $h$ satisfies \eqref{44}. Thus the Hessian matrix
is positively definite; therefore $\Phi$ is convex and continuous.
Then, \eqref{45} is a direct application of Corollary 2.2 in
\cite{f2}.

Finally, from \eqref{45} and the calculation:
\begin{equation} \label{matrix}
\Psi:\nabla\u=\sum^3_{i,j=1}\f{\mu(\t)}{2}\left(\f{\partial u^i}{\partial x_j}
+\f{\partial u^j}{\partial x_i}\right)^2+\lambda(\t)|\Dv\u|^2,
\end{equation}
\eqref{46} follows.
\end{proof}

In the next two steps, in order to obtain the variational solution
of the initial-boundary value problem \eqref{1}-\eqref{17}, we
need to take the vanishing limits of the artificial viscosity
$\varepsilon\to 0$ and artificial pressure coefficient $\delta\to
0$ in the approximate solutions of \eqref{41}-\eqref{41b}. As seen
in \cite{f1,f2,hw1}, the techniques used in those two procedures
are rather similar. Moreover, in some sense, the step of taking
$\varepsilon\rightarrow 0$ is much easier than the step of taking
$\delta\rightarrow 0$ due to the higher integrability of $\r$.
Hence we will omit the step of taking $\varepsilon\rightarrow 0$
(readers can refer to Section 5 in \cite{hw1} or Section 4 in
\cite{f1}), and focus on the step of taking $\delta\rightarrow 0$.
Thus, we state without proof the result as $\varepsilon\rightarrow
0$ as follows.

\begin{Proposition}\label{p2}
Let $\beta>0$ be large enough and $\delta>0$ be fixed, then the
initial-boundary value problem \eqref{1}-\eqref{17} for full
compressible MHD equations admits an approximate solution
$(\r,\u,\t,\H)$ with parameter $\delta$ (as the limit of the
solutions to \eqref{41}-\eqref{41b} when $\varepsilon\to 0$) in
the following sense:
\begin{itemize}
\item The density $\r$ is a non-negative function, and
$$\r\in C([0,T];L^\beta_{weak}(\O)),$$
satisfying the initial condition in \eqref{42}. The velocity $\u$
and the magnetic field $\H$ belong to $L^2([0,T]; W_0^{1,2}(\O))$.
The equation \eqref{11} and \eqref{14} are satisfied in
$\mathcal{D}'(\O\times(0,T))$ and
$$\delta\int_0^T\!\!\!\!\int_{\O}\r^{\beta+1}\,dxdt\leq c(\delta).$$
Moreover, $\r, \u$ also solve equation \eqref{11} in the sense of renormalized solutions;
\item The functions $\r, \u, \t, \H$ solve a modified momentum equation
\begin{equation}\label{48}
(\r\u)_t+\Dv\left(\r\u\otimes\u\right)+\nabla (p(\r,\t)+\delta\r^\beta)=(\na \times \H)\times \H+\Dv\Psi,
\end{equation}
in $\mathcal{D}'(\O\times(0,T))$.
Furthermore, the momentum
$$\r\u\in C([0,T]; L_{weak}^{\f{2\gamma}{\gamma+1}}(\O))$$ satisfies the initial condition in \eqref{42};
\item The energy inequality
\begin{equation}\label{49}
\begin{split}
&\int_0^T\!\!\!\!\int_{\O}(-\psi_t)\left(\f{1}{2}\r|\u|^2
+\f{1}{2}|\H|^2+\r P_e(\r)+\f{\delta}{\beta-1}\r^\beta+(\r+\delta)Q(\t)\right)dxdt\\
&\quad +\delta\int_0^T\!\!\!\!\int_{\O}\psi\left(\Psi:\nabla\u+\nu|\nabla\times\u|^2
   +\t^{\alpha+1}\right) dxdt \\
&\leq\int_{\O}\left(\f{1}{2}\f{|m_{0,\dl}|^2}{\r_{0,\dl}}+\f{1}{2}|\H_0|^2+\r_{0,\delta}
P_e(\r_{0,\delta})+\f{\delta}{\beta-1}\r_{0,\delta}^\beta+(\r_{0,\delta}+\delta)
 Q(\t_{0,\delta})\right)dx
\end{split}
\end{equation}
holds for any $\psi\in C^\infty([0,T])$ satisfying
$$\psi(\cdot,0)=1,\quad \psi(\cdot,T)=0,\quad \psi_t\leq 0;$$
\item The temperature $\t$ is a non-negative function, and
\begin{equation}\label{410}
\t\in L^{\alpha+1}(\O\times(0,T)), \quad\t^{\f{\alpha+1-\omega}{2}}\in L^2([0,T];W^{1,2}(\O)),\quad \omega\in(0,1],
\end{equation}
satisfying the thermal energy inequality in the following renormalized sense:
\begin{equation}\label{411}
\begin{split}
&\int_0^T\!\!\!\!\int_{\O}\left((\r+\delta)Q_h(\t)\partial_t\varphi+
\r Q_h(\t)\u\cdot\nabla\varphi+K_h(\t)\Delta\varphi-\delta h(\t)\t^{\alpha+1}\varphi\right)dxdt\\
&\leq\int_0^T\!\!\!\!\int_{\O}\left((\delta-1)h(\t)
(\Psi:\nabla\u+\nu|\nabla\times\H|^2)+h'(\t)\kappa(\t)|\nabla\t|^2\right)\varphi dxdt\\
&\quad +\int_0^T\!\!\!\!\int_{\O}h(\t)\t
p_{\t}(\r)\Dv\u\varphi\,dxdt-\int_{\O}(\r_{0,\delta}+\delta)Q_h(\t_{0,\delta})\varphi(0)\,dx,
\end{split}
\end{equation}
for any admissible function $h\in C^\infty(\R^+)$ satisfying \eqref{44}
and any test function $\varphi\in C^2(\O\times[0,T])$ satisfying
$$ \varphi\geq 0,\quad \varphi(\cdot,T)=0,\quad \nabla\varphi\cdot {\bf n}|_{\partial\O}=0.$$
\end{itemize}
\end{Proposition}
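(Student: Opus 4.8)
The plan is to carry out the vanishing artificial viscosity limit $\varepsilon\to 0$ with $\delta>0$ held fixed, starting from the solutions furnished by Proposition \ref{p1}. First I would isolate those bounds in Proposition \ref{p1} that are uniform in $\varepsilon$: for fixed $\delta$ the density is bounded in $L^\infty([0,T];L^\beta(\O))$ and, by \eqref{412}, in $L^{\beta+1}(\O\times(0,T))$, while $\u$ and $\H$ are bounded in $L^2([0,T];W_0^{1,2}(\O))$ and $\t$ obeys the gradient bound of Remark \ref{r1}. Passing to subsequences, these give weak (or weak-$*$) limits $\r_\varepsilon\to\r$, $\u_\varepsilon\to\u$, $\t_\varepsilon\to\t$, $\H_\varepsilon\to\H$ in the corresponding spaces, and the task is to show the limit solves \eqref{11}, \eqref{48}, \eqref{14} together with the energy inequality \eqref{49} and the thermal inequality \eqref{411}.

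The artificial diffusion is disposed of immediately: since the second bound in \eqref{412} gives $\varepsilon\int_0^T\!\!\int_\O|\na\r|^2\leq c_2$ uniformly, one has $\|\varepsilon\na\r_\varepsilon\|_{L^2}^2\leq\varepsilon c_2\to 0$, so both $\varepsilon\Delta\r$ and $\varepsilon\na\u\cdot\na\r$ vanish in $\mathcal{D}'(\O\times(0,T))$, and the limiting continuity and momentum equations \eqref{11}, \eqref{48} take the stated form. For the magnetic field, the uniform $L^2(W_0^{1,2})$ bound together with a bound on $\H_t$ read off from the induction equation yields, by an Aubin--Lions argument, strong convergence $\H_\varepsilon\to\H$ in $L^2(\O\times(0,T))$; this is what allows the quadratic terms $(\na\times\H)\times\H$ and $\u\times\H$ to pass to the limit.

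The heart of the matter is the strong convergence of the density, needed to identify the weak limit of the pressure $p(\r,\t)+\delta\r^\beta$ with the same expression evaluated at the limit. Here I would follow the Lions--Feireisl effective-viscous-flux program in the MHD setting of \cite{hw1}: establish the weak continuity of $p(\r,\t)+\delta\r^\beta-(\lambda(\t)+2\mu(\t))\Dv\u$ tested against $b(\r)$, checking that the Lorentz force $(\na\times\H)\times\H$ sits in a sufficiently good space (guaranteed by the $L^2(W^{1,2}_0)$ bound on $\H$) so as not to disturb the identity; then invoke the renormalized continuity equation, available because the parabolic regularization makes the approximate densities renormalized in the sense of DiPerna--Lions, to compare $\overline{\r\ln\r}$ with $\r\ln\r$ and conclude $\r_\varepsilon\to\r$ a.e.\ The large exponent $\beta$ and the $L^{\beta+1}$ bound of \eqref{412} are precisely what make this step considerably easier than the later $\delta\to 0$ passage. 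In parallel, the bounds on $\na\t^{\alpha/2}$ and $\na\ln\t$ combined with the structure of the thermal energy inequality give, again through Aubin--Lions compactness of $(\r+\delta)Q(\t)$, the almost-everywhere convergence $\t_\varepsilon\to\t$.

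With $\r_\varepsilon\to\r$ and $\t_\varepsilon\to\t$ a.e.\ and $\u_\varepsilon,\H_\varepsilon$ converging weakly in $L^2(W_0^{1,2})$, the final step is to pass to the limit term by term. The nonlinear convective and pressure terms converge by the strong convergence of $\r$ and $\t$; the energy inequality \eqref{49} and the renormalized thermal inequality \eqref{411} survive because the dissipation is weakly lower semicontinuous, which for the temperature-weighted viscous contribution $h(\t)\Psi:\na\u$ is exactly the content of Lemma \ref{l41}. The main obstacle throughout is the density compactness via the effective viscous flux, and specifically verifying that the magnetic coupling does not break the weak-continuity identity; once that is secured, everything else follows the now-standard scheme.
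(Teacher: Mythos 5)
Your proposal follows the route the paper intends: the paper in fact states Proposition \ref{p2} \emph{without proof}, deferring the $\varepsilon\to 0$ passage to Section 5 of \cite{hw1} and Section 4 of \cite{f1}, and your outline --- killing the artificial diffusion terms with the uniform bound $\varepsilon\int_0^T\!\!\int_\O|\na\r_\varepsilon|^2\,dxdt\le c_2$, Aubin--Lions compactness for $\H_\varepsilon$ and for $(\r_\varepsilon+\delta)Q(\t_\varepsilon)$, the effective-viscous-flux/renormalization argument for the strong convergence of the density, and Lemma \ref{l41} for the weak lower semicontinuity of the temperature-weighted dissipation --- is exactly that program, the same one the paper later executes in detail for $\delta\to 0$.

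There is, however, one concrete gap. You list among the bounds ``uniform in $\varepsilon$'' the estimate $\delta\int_0^T\!\!\int_\O\r_\varepsilon^{\beta+1}\,dxdt\le c_1(\varepsilon,\delta)$ of \eqref{412}, and you lean on it twice (for weak compactness of the pressure, and when you assert that the $L^{\beta+1}$ bound is what makes this step easier than $\delta\to 0$). But the paper is explicit that only $c_2$ in \eqref{412} is independent of $\varepsilon$; the constant $c_1(\varepsilon,\delta)$ is not, so \eqref{412} gives no information in the limit $\varepsilon\to 0$. What actually survives from the energy inequality is only $\r_\varepsilon\in L^\infty([0,T];L^\beta(\O))$, i.e.\ $\delta\r_\varepsilon^\beta$ bounded merely in $L^\infty([0,T];L^1(\O))$, which is not equi-integrable: its weak-$*$ limit could a priori contain a singular measure, so you could not identify $\nabla(p(\r,\t)+\delta\r^\beta)$ in \eqref{48}, you could not run the effective-viscous-flux computation (which needs the pressure sequence to converge weakly in $L^1$ and, when tested against $b(\r_\varepsilon)=\r_\varepsilon$, needs the product $(p(\r_\varepsilon,\t_\varepsilon)+\delta\r_\varepsilon^\beta)\r_\varepsilon$ to be bounded in $L^1$), and you would never obtain the estimate $\delta\int_0^T\!\!\int_\O\r^{\beta+1}\,dxdt\le c(\delta)$ that Proposition \ref{p2} itself asserts. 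The missing step is the refined pressure estimate at the $\varepsilon$-level: test the momentum equation with $\psi\,\mathcal{B}[\r_\varepsilon-\langle\r_\varepsilon\rangle]$, where $\mathcal{B}$ is the Bogovskii operator, to get $\int_0^T\!\!\int_\O\big(p(\r_\varepsilon,\t_\varepsilon)+\delta\r_\varepsilon^\beta\big)\r_\varepsilon\,dxdt\le c(\delta)$ uniformly in $\varepsilon$; note this is a use of the Bogovskii operator distinct from the effective-viscous-flux test functions $\mathcal{A}_i[\cdot]$ that you do mention, and both are needed. A secondary, smaller omission: the energy inequality of Proposition \ref{p1} carries the extra term $\int_0^T\!\!\int_\O\psi\, p_b(\r)\Dv\u\,dxdt$ and has $\f{a_2}{\gamma-1}\r^\gamma$ where \eqref{49} has $\r P_e(\r)$; converting one form into the other (via the continuity equation applied to $\r\int_1^\r p_b(\xi)\xi^{-2}\,d\xi$) is routine but must be done, and your sketch passes over it.
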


\medskip
\begin{Remark}
In Proposition \ref{p2}, the second estimate in \eqref{410} can be explained as follows. Taking
$$h(\t)=\f{1}{(1+\t)^\omega},\quad\omega\in(0,1],
\quad \varphi(t,x)=\psi(t),\quad 0\leq\psi\leq 1,\quad\psi\in \mathcal{D}(0,T),$$
in \eqref{43}, we obtain
\begin{equation*}
\begin{split}
&\omega\int_0^T\int_{\O}\f{\kappa(\t_\varepsilon)}
{(1+\t_\varepsilon)^{\omega+1}}|\nabla\t_\varepsilon|^2\psi\,dxdt\\
&\leq -\int_0^T\!\!\!\!\int_{\O}(\r_\varepsilon+
\delta)Q_h(\t_\varepsilon)\psi_t\,dxdt
+\delta\int_0^T\!\!\!\!\int_{\O} h(\t_\varepsilon)\t_\varepsilon^{\alpha+1}\psi\,dxdt\\
&\quad +\int_0^T\!\!\!\!\int_{\O}\t_\varepsilon p_{\t}
(\r_\varepsilon)|\Dv\u_\varepsilon|\psi\,dxdt\\
&\quad +\varepsilon\int_0^T\!\!\!\!\int_{\O}
|\nabla\r_\varepsilon\cdot\nabla\left((Q_h(\t_\varepsilon)-Q(\t_\varepsilon)
h(\t_\varepsilon))\varphi\right)|\,dxdt.
\end{split}
\end{equation*}
Observing that
$$\int_0^T\!\!\!\!\int_{\O}\left|\nabla(1+\t_\varepsilon)^{
\f{\alpha+1-\omega}{2}}\right|^2\psi\,dxdt\leq
c\int_0^T\!\!\!\!\int_{\O}\f{\kappa(\t_\varepsilon)}
{(1+\t_\varepsilon)^{\omega+1}}|\nabla\t_\varepsilon|^2\psi\,dxdt,$$
and
\begin{equation*}
\begin{split}
&\int_0^T\!\!\!\!\int_{\O}\t_\varepsilon p_\t(\r_\varepsilon)|\Dv\u_\varepsilon|\psi\,dxdt\\
&\leq c\|\t_\varepsilon\|_{L^2([0,T]; L^6(\O))}\|\u_\varepsilon
\|_{L^2([0,T]; W^{1,2}(\O))}\| p_\t(\r_\varepsilon)\|_{L^\infty([0,T]; L^3(\O))},
\end{split}
\end{equation*}
and, by the hypothesis \eqref{26}, we have
\begin{equation*}
\begin{split}
&\varepsilon\int_0^T\!\!\!\!\int_{\O}|\nabla\r_\varepsilon\cdot\nabla[(Q_h(\t_\varepsilon)-Q(\t_\varepsilon)h(\t_\varepsilon))\varphi]|
\,dxdt\\
&\leq c\varepsilon\|\psi\|_{L^\infty}
\|\nabla\r_\varepsilon\|_{L^2}\left\|\f{Q(\t_\varepsilon)}{(1+\t_\varepsilon)^{\omega+1}}
\nabla\t_\varepsilon\right\|_{L^2} \\
&\leq c\varepsilon\|\nabla\r_\varepsilon\|_{L^2}
\left\| (1+\t_\varepsilon)^{\f{\alpha-1-\omega}{2}}\nabla\t_\varepsilon\right\|_{L^2}\\
&=c\varepsilon\|
\nabla\r_\varepsilon\|_{L^2}\left\|\nabla(1+\t_\varepsilon)^{\f{\alpha+1-\omega}{2}}
\right\|_{L^2},
\end{split}
\end{equation*}
where, the following property is used
$$\f{Q(\t)^2}{(1+\t)^{\omega+1}}|\nabla\t|^2\leq c(1+\t)^\alpha|\nabla\t|^2.$$
By Young's inequality, Remark \ref{r1}, and the energy
inequality in Proposition \ref{p1}, one has
$$(1+\t_\varepsilon)^{\f{\alpha+1-\omega}{2}}\in L^2([0,T];
W^{1,2}(\O)).$$
 Thus, $$\t_\varepsilon^{\f{\alpha+1-\omega}{2}}\in
L^2([0,T]; W^{1,2}(\O)),$$ since
$$\t_\varepsilon^{\f{\alpha-1-\omega}{2}}|\nabla\t_\varepsilon|
\leq(1+\t_\varepsilon)^{\f{\alpha-1-\omega}{2}}|\nabla\t_\varepsilon|.$$

\end{Remark}

In the next Section, we shall take the limit of the other artificial term: the artificial pressure, as $\delta\to 0$.

\bigskip

\section{ The Limit of Vanishing Artificial Pressure}

In this section, we take the limit as $\delta\to 0$ to eliminate
the $\delta$-dependent terms appearing in \eqref{41}, while in the
previous Section passing to the limit as $\varepsilon\to 0$ has
been done. Denote by $\{\r_\delta, \u_\delta, \t_\delta,
\H_\delta\}_{\delta>0}$ the sequence of approximate solutions
obtained in Proposition \ref{p2}. In addition to the possible
oscillation effects on density, the concentration effects on
temperature is also a major issue of this section. To deal with
these difficulties, we employ a variant of well-known
Feireisl-Lions method \cite{f1,f2,p2} in our \textit{new} context.

\subsection{Energy estimates}

The main object in this subsection is to find sufficient \textit{a
priori} estimates. First, our choice of
the initial data \eqref{42} implies that, as $\delta\to 0$,
\begin{equation*}
\begin{split}
&\int_{\O}\left(\f{1}{2}\f{|m_{0,\delta}|^2}{\r_{0,\delta}}+\f{1}{2}|\H_0|^2+
\r_{0,\delta} P_e(\r_{0,\delta})+\f{\delta}{\beta-1}
\r_{0,\delta}^\beta+(\r_{0,\delta}+\delta)Q(\t_{0,\delta})\right)dx\\
&\rightarrow E[\r, \u, \t,
\H](0)=\int_{\O}\left(\r_0P_e(\r_0)+\r_0Q(\t_0)+\f{1}{2}
\f{|m_0|^2}{\r_0}+\f{1}{2}|\H_0|^2\right)dx.
\end{split}
\end{equation*}
Hence, from the energy inequality \eqref{49}, we can conclude that
\begin{equation}\label{51}
\r_\delta \quad\textrm{ is bounded in } L^\infty([0,T]; L^\gamma(\O)),
\end{equation}
\begin{equation}\label{52}
\sqrt{\r_\delta}\u_\delta, \; \H_\delta \quad\textrm{ are bounded in } L^\infty([0,T]; L^2(\O)),
\end{equation}
\begin{equation}\label{53}
(\r_\delta+\delta)Q(\t_\delta) \quad\textrm{ is bounded in } L^\infty([0,T]; L^1(\O)),
\end{equation}
\begin{equation}\label{55}
\delta\int_0^T\!\!\!\!\int_{\O}\left(\r_\delta^{\beta}+\t_\delta^{\alpha+1}\right)dxdt\leq
c,
\end{equation}
for some constant c, which is independent of $\delta$.

Now, we take $$\varphi(x,t)=\f{T-\f{1}{2}t}{T},\quad h(\t)=\f{1}{1+\t}$$
 in \eqref{411}  to obtain
\begin{equation}\label{57}
\begin{split}
&\int_0^T\!\!\!\!\int_{\O}\left(\f{1-\delta}{1+\t_\delta}
(\Psi_\delta:\nabla\u_\delta+\nu|\nabla\times\H_\delta|^2)+\f{\kappa(\t_\delta)}
{(1+\t_\delta)^2}|\nabla\t_\delta|^2\right)\,dxdt\\
&\leq
2\int_0^T\!\!\!\!\int_{\O}\f{\t_\delta}{1+\t_\delta}p_\t
(\r_\delta)\Dv\u_\delta\,dxdt+2\delta\int_0^T\!\!\!\!\int_{\O}\t_\delta^{\alpha}
\,dxdt\\
&\quad -2\int_{\O}(\r_{0,\delta}+\delta)Q_{1}(\t_{0,\delta})\,dx+\int_{\O}(\r_{\delta}+\delta)Q_{1}
(\t_{\delta})(T)\,dx,
\end{split}
\end{equation}
where
$$Q_{1}(\t)=\int_0^\t\f{c_\upsilon(\xi)}{1+\xi}d\xi \leq Q(\t).$$

Using the estimates \eqref{53} and \eqref{55}, we deduce from \eqref{57} that
\begin{equation*}
\begin{split}
&\int_0^T\!\!\!\!\int_{\O}\left(\f{1-\delta}{1+\t_\delta}
(\Psi_\delta:\nabla\u_\delta+\nu|\nabla\times\H_\delta|^2)+\f{\kappa(\t_\delta)}
{(1+\t_\delta)^2}|\nabla\t_\delta|^2\right)\,dxdt\\
&\leq
c\left(1+\int_0^T\!\!\!\!\int_{\O}p_{\t}(\r_\delta)\Dv\u_\delta\,dxdt\right),
\end{split}
\end{equation*}
for some constant c which is independent of $\delta$, and here the
second term on right-hand side can be rewritten with the help of the
renormalized continuity equation as
$$\int_0^T\!\!\!\!\int_{\O}p_{\t}(\r_\delta)\Dv\u_\delta\,dxdt=
\int_0^T\!\!\!\!\int_{\O}\partial_t(\r_\delta P_\t(\r_\delta))\,dxdt.$$
By the hypothesis \eqref{23} and the estimate \eqref{51}, one has
\begin{equation*}
\begin{split}
\int_0^T\!\!\!\!\int_{\O}\partial_t(\r_\delta P_\t(\r_\delta)) dxdt
\leq c(\O)\left(1+\int_{\O}\r_\delta^{\f{\gamma}{3}}\,dx\right)
\leq c(\O)\left(1+\int_{\O}\r_\delta^{\gamma}\,dx\right)\leq c(\O,T).
\end{split}
\end{equation*}
Consequently, we can conclude that
\begin{equation*}
\f{\kappa(\t_\delta)}
{(1+\t_\delta)^2}|\nabla\t_\delta|^2\in L^1(\O\times(0,T)),
\end{equation*}
which, combining with the hypothesis \eqref{25}, gives us that
\begin{equation*}
\nabla\ln(1+\t_\delta), \;  \nabla\t_\dl^{\f{\alpha}{2}}  \textrm{ are bounded in } L^2(\O\times(0,T)).
\end{equation*}
Thus, combining with Sobolev's imbedding theorem, we obtain that
\begin{equation}\label{59}
\ln(1+\t_\delta), \; \t_\dl^{\f{\alpha}{2}}  \textrm{ are bounded
in } L^2([0,T]; W^{1,2}(\O)).
\end{equation}
Moreover, in view of the hypothesis \eqref{23}, we get
\begin{equation}\label{510}
\t_\delta p_\t(\r_\delta) \textrm{ is bounded in } L^2(\O\times(0,T)).
\end{equation}

With \eqref{510} in hand, we can repeat the same procedure as above,
taking now $$h(\t)=\f{1}{(1+\t)^\omega},\quad \omega\in(0,1), \quad \varphi(x,t)=\f{T-\f{1}{2}t}{T},$$
 and finally we can get
\begin{equation}\label{511}
\int_0^T\!\!\!\!\int_{\O}\left(\f{1-\delta}{(1+\t_\delta)^\omega}
(\Psi_\delta:\nabla\u_\delta+\nu|\nabla\times\H_\delta|^2)+\omega\f{\kappa(\t_\delta)}
{(1+\t_\delta)^{1+\omega}}|\nabla\t_\delta|^2\right)dxdt\leq c,
\end{equation}
for some constant c which is independent of $\delta$.

Letting $\omega\rightarrow 0$ and using the monotone convergence
theorem, we deduce that
\begin{equation}\label{513}
\u_\delta, \; \H_\delta    \textrm{ are bounded in } L^2([0,T]; W^{1,2}_0(\O)).
\end{equation}
Moreover, from \eqref{511}, we have
\begin{equation}\label{514}
(1+\t_\dl)^{\f{\alpha+1-\omega}{2}}  \textrm{ is bounded in }
L^2([0,T]; W^{1,2}(\O)), \textrm{ for any } \omega\in(0,1].
\end{equation}
In particular, this implies that
\begin{equation}\label{515}
\t_\delta  \textrm{ is bounded in } L^2([0,T];L^{3\alpha+2}(\O)).
\end{equation}
Using H\"{o}lder inequality, we have
$$\|\t_\delta^{\alpha+\f{4}{3}}\|_{L^1(D)}\leq \parallel\t_\delta^{\alpha+\f{2}{3}}
\|_{L^{3}(D)}\|\t_\delta^{\f{2}{3}}\|_{L^{\f{3}{2}}(D)}\leq
\parallel
\t_\delta^{\alpha+\f{2}{3}}\|_{L^{3}(D)}\|\t_\delta\|_{L^1(D)}^{\f{2}{3}}$$
for any $D\subset\O$, which, together with \eqref{514}, \eqref{53}
and the hypothesis \eqref{26}, yields
\begin{equation*}
\int_{\{\r_\delta\geq
d\}}\t_\delta^{\alpha+\f{4}{3}}\,dxdt\leq c(d),
\end{equation*}
for any $d>0$.
In particular,
\begin{equation}\label{a1}
\int_{\{\r_\delta\geq d\}}\t_\delta^{\alpha+1}\,dxdt\leq
c(d)
\end{equation}
for any $d>0$.

\subsection{Temperature estimates}

In order to pass to the limit in the term $K(\t)$, our aim in this
subsection is to derive uniform estimates on $\t_\delta$ in
$L^{\alpha+1}(\O\times(0,T))$. To this end, we will follow the
argument in \cite{f2}.

To begin with, we have
$$\int_{\{\r_\delta\geq d\}}\r_\delta\,dx\geq M_\delta-d|\O|\geq \f{M}{2}-d|\O|,$$
where $M_\delta$ denotes the total mass,
$$M_\delta=\int_\O\r_\delta\,dx,$$
independent of $t\in[0,T]$, and
$$M=\int_\O\r_0\,dx>0.$$
On the other hand, H\"{o}lder inequality yields
$$\int_{\{\r_\delta\geq d\}}\r_\delta\,dxdt\leq
\|\r_\delta\|_{L^\gamma(\O)}|\{\r_\delta\geq d\}|^{\f{\gamma-1}{\gamma}}.$$
Consequently, there exists a function $\Lambda=\Lambda(d)$
independent of $\delta>0$ such that
$$|\{\r_\delta\geq d\}|\geq\Lambda(d)>0 \textrm{ for all } t\in[0,T], \textrm{ if }0\leq d<\f{M}{2|\O|}.$$
Fix $0<d<\f{M}{4|\O|}$ and choose a function $b\in C^\infty(R)$ such that
$$ b \textrm{ is non-increasing}; \quad b(z)=0 \textrm{ for } z\leq d,\quad b(z)=-1 \textrm{ if } z\geq 2d.$$
For each $t\in[0,T]$, let $\eta=\eta(t)$ be the unique solution of the Neumann problem:
\begin{equation}\label{a2}
\begin{cases}
\Delta\eta=b(\r_\delta(t))-\f{1}{|\O|}\int_\O b(\r_\delta(t))\,dx \, \textrm{ in }\O,\\
\nabla\eta\cdot {\bf n}|_{\partial\O}=0, \quad \int_\O\eta\,dx=0,
\end{cases}
\end{equation}
where $\eta=\eta(t)$ is a function of spatial variable $x\in\O$ with $t$ as a parameter.
Since the right-hand side of \eqref{a2} has a bound which is
independent of $\delta$, there is a constant $\underline{\eta}$ such
that
$$\eta=\eta(t)\geq \underline{\eta} \textrm{ for all } \dl>0 \text{ and } t\in[0,T].$$

Now, we take
$$\varphi(x,t)=\psi(t)(\eta-\underline{\eta}),\quad 0\leq\psi\leq 1, \quad\psi\in\mathcal{D}(0,T),$$
as a test function in \eqref{411} to obtain
\begin{equation}\label{a3}
\begin{split}
&\int_0^T\!\!\!\!\int_{\O} K_h(\t_{\delta})\left(b(\r_{\delta}(t))-
\f{1}{|\O|}\int_{\O} b(\r_{\delta}(t))\right)\psi\,dxdt\\
&\leq 2\|\eta\|_{L^\infty(\O\times(0,T))}\int_0^T\!\!\!\!\int_{\O}
\left(\dl h(\t_\dl)\t_\dl^{1+\alpha}+h(\t_\dl)\t_\dl p_\t(\r_\dl)|\Dv\u_\dl|\right)dxdt\\
&\quad + \|\nabla\eta\|_{L^\infty(\O\times(0,T))}\int_0^T\!\!\!\!\int_{\O}
\r_\dl Q_h(\t_\dl)|\u_\dl|\,dxdt\\
&\quad +\int_0^T\!\!\!\!\int_{\O}\left((\r_\dl+\dl)Q_h(\t_\dl)(\underline{\eta}-
\eta)\psi_t-(\r_\dl+\dl)Q_h(\t_\dl)\psi
\partial_t\eta\right)dxdt.
\end{split}
\end{equation}

Next, taking $$h(\t)=\f{1}{(1+\t)^\omega}$$ for $0<\omega<1$, letting
$\omega\rightarrow 0$, using Lebesgue's dominated convergence
theorem and the estimates \eqref{51}, \eqref{53}, \eqref{55},
and \eqref{510}, we obtain
\begin{equation}\label{a4}
\begin{split}
&\int_0^T\!\!\!\!\int_\O K(\t_\dl)\left(b(\r_{\delta}(t))-
\f{1}{|\O|}\int_{\O} b(\r_{\delta}(t))\right)\,dxdt\\
&\leq
c\left(1+\int_0^T\!\!\!\!\int_\O(\r_\delta+\dl)\t_\dl|\partial_t\eta|\,dxdt\right).
\end{split}
\end{equation}
On the other hand,
\begin{equation*}
\begin{split}
&\int_0^T\!\!\!\!\int_\O K(\t_\delta)\left(b(\r_{\delta}(t))-
\f{1}{|\O|}\int_{\O} b(\r_{\delta}(t))\right)\,dxdt\\
&=\int_{\{\r_\dl<d\}}K(\t_\dl)\left(b(\r_{\delta}(t))-\f{1}
{|\O|}\int_{\O} b(\r_{\delta}(t))\right)\,dxdt\\
&\quad +\int_{\{\r_\dl\geq
d\}}K(\t_\dl)\left(b(\r_{\delta}(t))-\f{1}{|\O|}\int_{\O}
b(\r_{\delta}(t))\right)\,dxdt.
\end{split}
\end{equation*}
where, by virtue of \eqref{a1}, the second integral on the
right-hand side is bounded by a constant independent of $\dl$.
Furthermore,
$$-\f{1}{|\O|}\int_{\O}b(\r_\dl)\,dx\geq -\f{1}{|\O|}\int_{\{\r_\dl
\geq 2d\} }b(\r_\dl)\,dx=\f{|\{\r_\dl\geq 2d\} |}{|\O|}\geq \f{\Lambda(2d)}{|\O|}>0.$$
Thus, we obtain
\begin{equation}\label{a5}
\begin{split}
&\int_{\{\r_\dl<d\}}K(\t_\dl)\left(b(\r_{\delta}(t))-\f{1}{|\O|}\int_{\O}
 b(\r_{\delta}(t))\right)\,dxdt\\
&\geq\f{\Lambda(2d)}{|\O|}\int_{\{\r_\dl<d\}}K(\t_\dl)\,dxdt.
\end{split}
\end{equation}
Combining  \eqref{a4}, \eqref{a5} together, we get
\begin{equation}\label{a6}
\int_{\{\r_\dl<d\}}K(\t_\dl)\,dxdt\leq
c\left(1+\int_0^T\!\!\!\!\int_\O(\r_\dl+\dl)\t_\dl|\partial_t\eta|\,dxdt\right)
\end{equation}
with c independent of $\dl$.

Finally, since $\r_\dl$ is a renormalized solution of the equation \eqref{11}, we have,
\begin{equation*}
\begin{split}
&\Delta(\partial_t\eta)=\partial_t b(\r_\dl)-\f{1}{|\O|}\int_\O\partial_t b(\r_\dl)\,dx\\
&=(b(\r_\dl)-b'(\r_\dl)\r_\dl)\Dv\u_\dl-\Dv(b(\r_\dl)\u_\dl)+\f{1}{|\O|}\int_\O
(b'(\r_\dl)\r_\dl-b(\r_\dl))\Dv\u_\dl\,dx.
\end{split}
\end{equation*}
Hence,
$$\partial_t\eta \textrm{ is bounded in } L^2([0,T];W^{1,2}(\O)).$$
Consequently, using \eqref{a6}, we conclude that
$$\int_{\{\r_\dl< d\}}K(\t_\dl)\,dxdt\leq c, \textrm{ c independent of } \dl$$
which, together with \eqref{a1} and the hypothesis \eqref{25}, yields
\begin{equation}\label{a7}
\t_\dl \textrm{ is bounded in } L^{1+\alpha}(\O\times(0,T)).
\end{equation}

\subsection{ Refined pressure estimates}

Our goal now is to improve estimates on pressure. We follow step by
step the argument of Section 5.1 in \cite{hw1}, that is, using the
Bogovskii operator ``$\Dv^{-1}[\ln(1+\r_\delta)]$'' as a test
function for the modified momentum equation \eqref{48}. Similarly to
Lemma 5.1 in \cite{hw1}, we have the estimate
\begin{equation}\label{516}
\int_{\O}(p(\r_\delta,
\t_\dl)+\delta\r_\delta^\beta)\ln(1+\r_\delta)\,dx\leq c,
\end{equation}
and hence
\begin{equation}\label{517}
p(\r_\dl, \t_\dl)\ln(1+\r_\delta) \textrm{ is bounded in }
L^1(\O\times(0,T)).
\end{equation}
Let us define the set
\begin{equation*}
J^{\dl}_k=\{(x,t)\in (0,T)\times \Omega: \r_\dl
(x,t)\leq k\} \textrm{ for } k>0 \textrm{ and } \dl \in
(0,1).
\end{equation*}
In view of \eqref{51} and the hypothesis \eqref{23}, there exists
a constant $s\in(0,\infty)$ such that for all $\dl \in (0,1)$ and
$k>0$,
$$
|\{(0,T)\times\Omega-J_k^\dl\}|\leq \f{s}{k}.
$$
We have the following estimate:
\begin{equation}\label{518}
\begin{split}
\int_0^T\!\!\!\!\int_{\O}\dl \r_\dl^\beta\,dxdt
&= \int_{J_k^\dl}\dl\r_\dl^\beta\,dxdt+
\int_{\O\times(0,T)-J_k^\dl}\dl\r_\dl^\beta\,dxdt\\
&\leq T\dl k^\beta |\O|+\dl
\int_0^T\!\!\!\!\int_{\O}\chi_{\O\times(0,T)-J_k^\dl}\r_\dl^\beta\,dxdt.
\end{split}
\end{equation}
Then, by the H\"{o}lder inequality in Orlicz spaces (cf. \cite{a1}) and the estimate
\eqref{516}, we obtain
\begin{equation}\label{519}
\begin{split}
&\dl\int_0^T\!\!\!\!\int_{\O}\chi_{\O\times(0,T)-J_k^\dl}\r_\dl^\beta\,dxdt\leq
\dl\|\chi_{\O\times(0,T)-J_k^\dl}\|_{L_N}\max\{1,\int_0^T\!\!\!\!\int_{\O}
M(\r_\dl^\beta)\,dxdt\}\\
&\leq\dl\left(N^{-1}\left(\f{k}{s}\right)\right)^{-1}\max\{1, \int_0^T\!\!\!\!\int_{\O}2
(1+\r_\dl^\beta)\textrm{ln}(1+\r_\dl^\beta)\,dxdt\}\\
&\leq \dl\left(N^{-1}\left(\f{k}{s}\right)\right)^{-1}\max\{1,
(4\textrm{ln}2)T|\O|
+4\beta\int_0^T\!\!\!\!\int_{\O\cap\{\r_\dl\geq 1\}}
\r_\dl^\beta\textrm{ln}(1+\r_\dl)\,dxdt\}\\
&\leq \left(N^{-1}\left(\f{k}{s}\right)\right)^{-1}\max\{\dl,
(4\textrm{ln}2)\dl
T|\O|+4\dl\beta\int_0^T\!\!\!\!\int_{\O}
\r_\dl^\beta\textrm{ln}(1+\r_\dl)\,dxdt\},
\end{split}
\end{equation}
where $L_M(\Omega)$, and $L_N(\Omega)$ are two Orlicz Spaces generated by two complementary N-functions
$$M(s)=(1+s)\textrm{ln}(1+s)-s,\quad N(s)=e^s-s-1,$$
respectively.
Due to \eqref{516}, we know, if $\delta<1$,
$$\max\{\dl, (4\textrm{ln}2)\dl T|\O|+4\dl\beta\int_0^T\!\!\!\!\int_{\O}
\r_\dl^\beta\textrm{ln}(1+\r_\dl)\,dxdt\}\leq c,$$ for some
$c>0$ which is independent of $\delta$.
Combining \eqref{518} with \eqref{519}, we obtain the following estimate
\begin{equation*}
\left|\int_0^T\!\!\!\!\int_{\O}\dl \r_\dl^\beta\,dxdt\right|\leq T\dl
k^\beta
|\O|+c\left(N^{-1}\left(\f{k}{s}\right)\right)^{-1},
\end{equation*}
where $c$ does not depend on $\dl$ and k. Consequently
\begin{equation}\label{520}
\limsup_{\dl\rightarrow 0}\left|\int_0^T\!\!\!\!\int_{\O}\dl
\r_\dl^\beta\,dxdt\right|\leq
c\left(N^{-1}\left(\f{k}{s}\right)\right)^{-1}.
\end{equation}
The right-hand side of \eqref{520} tends to zero as $k\rightarrow\infty$.
Thus, we have
\begin{equation*}
\lim_{\dl\rightarrow 0}\int_0^T\!\!\!\!\int_{\O}\dl
\r_\dl^\beta\,dxdt=0,
\end{equation*}
which yields
\begin{equation}\label{521}
\dl \r_\dl^\beta \rightarrow 0  \textrm{ in }
\mathcal{D}'(\O\times(0,T)).
\end{equation}

\subsection{Strong convergence of the temperature}

Since $\r_\delta$, $\u_\delta$ satisfy the continuity equation \eqref{11}, then
\begin{equation}\label{522}
\r_\delta\rightarrow\r \textrm{ in } C([0,T]; L^\gamma_{weak}(\O)),
\end{equation}
\begin{equation}\label{523}
\u_\delta\rightarrow\u  \textrm{ weakly in } L^2([0,T]; W^{1,2}_0(\O)),
\end{equation}
and thus
\begin{equation}\label{524}
\r_\delta\u_\delta\rightarrow\r\u \textrm{ weakly-* in } L^\infty([0,T]; L^{\f{2\gamma}{1+\gamma}}(\O)),
\end{equation}
where the limit functions $\r\geq 0$, $\u$ satisfy the continuity
equation \eqref{11} in $\mathcal{D}'(\O\times(0,T))$. Similarly,
since $\r_\delta$, $\u_\delta$, $\t_\delta$, $\H_\delta$ satisfy
the momentum equation \eqref{48}, we have
$$\r_\delta\u_\delta\rightarrow\r\u \textrm{ in } C([0,T]; L^{\f{2\gamma}{1+\gamma}}_{weak}(\O))$$
and
$$\r_\delta\u_\delta\otimes\u_\delta\rightarrow \r\u\otimes\u
\textrm{ weakly in } L^2([0,T]; L^{\f{6\gamma}{3+4\gamma}}(\O)).$$

From the hypothesis \eqref{23}, equation
\eqref{14}, and the estimates \eqref{51}, \eqref{513}, we can assume
$$p_\t(\r_\delta)\rightarrow \overline{p_\t(\r)} \textrm{ weakly in } L^\infty([0,T]; L^3(\O)),$$
$$\H_\delta\rightarrow\H \textrm{ weakly in } L^2([0,T]; W^{1,2}_0(\O))\cap C([0,T];L^2_{weak}(\O)),$$
with $\Dv\H=0$ in $\mathcal{D}'(\O\times(0,T))$. Hence, $\r$,
$\r\u$, $\H$ satisfy the initial data \eqref{116}. Due to the
estimate \eqref{514}, we can also assume
$$\t_\delta\rightarrow\t \textrm{ weakly in } L^2([0,T]; W^{1,2}(\O)),$$
with $\theta\geq 0$ in $\mathcal{D}'(\O\times(0,T))$, since
$$|\nabla\t_\delta|\leq (1+\t_\delta)^{\f{\alpha-1-\omega}{2}}|\nabla\t_\delta|, \textrm{ for } \omega\in(0,1).$$
Thus
\begin{equation}\label{525}
\t_\delta p_\t(\r_\delta)\rightarrow\t \overline{p_\t(\r) } \textrm{ weakly in } L^2([0,T]; L^{2}(\O)),
\end{equation}
\begin{equation}\label{526}
(\nabla\times\H_\delta)\times\H_\delta\rightarrow(\nabla\times\H)\times\H \textrm{ in } \mathcal{D}'(\O\times(0,T)).
\end{equation}
Similarly,
\begin{equation}\label{527}
\nabla\times(\u_\delta\times\H_\delta)\rightarrow\nabla\times(\u\times\H) \textrm{ in } \mathcal{D}'(\O\times(0,T)),
\end{equation}
\begin{equation}\label{528}
\nabla\times(\nu\nabla\times\H_\delta)\rightarrow\nabla\times(\nu\nabla\times\H) \textrm{ in } \mathcal{D}'(\O\times(0,T)).
\end{equation}
In view of \eqref{59} and the hypothesis \eqref{26}, we can assume
\begin{equation}\label{529}
Q_h(\t_\delta)\rightarrow \overline{Q_h(\t)} \textrm{ weakly in } L^2([0,T]; W^{1,2}(\O)),
\end{equation}
\begin{equation}\label{530}
M(\t_\delta)\rightarrow \overline{M(\t)} \textrm{ weakly in } L^2([0,T]; W^{1,2}(\O)),
\end{equation}
for any $M\in C^1[0,\infty)$ satisfying the growth restriction
$$|M'(\xi)|\leq c(1+\xi^{\f{\alpha}{2}-1}),$$
and, consequently,
\begin{equation}\label{531}
(\r_\delta+\delta) Q_h(\t_\delta)\rightarrow\r \overline{Q_h(\t)}  \textrm{ weakly in } L^2([0,T]; L^{\f{6\gamma}{\gamma+6}}(\O)),
\end{equation}
since $L^\gamma \hookrightarrow\hookrightarrow W^{-1,2}(\O)$, if
$\gamma>\f{3}{2}$.

At this stage we  need a variant of the celebrated
Aubin-Lions Lemma (cf. Lemma 6.3 in \cite{f2}):
\begin{Lemma}\label{l52}
Let $\{\t_n\}_{n=1}^\infty$ be a sequence of functions such that
$$\{\t_n\}_{n=1}^\infty \textrm{ is bounded in } L^2([0,T]; L^q(\O))\cap
L^\infty([0,T];L^1(\O)), \textrm{ with } q>\f{6}{5},$$
and assume that
$$\partial_t\t_n\geq\chi_n \textrm{ in } \mathcal{D}'(\O\times(0,T)),$$
where
$$\chi_n \textrm{ are bounded in } L^1([0,T]; W^{-m, r}(\O))$$
for certain $m\geq 1$, $r>1$.
Then $\{\t_n\}_{n=1}^\infty$ contains a subsequence such that
$$\t_n\rightarrow\t \textrm{ in } L^2([0,T]; W^{-1,2}(\O)).$$
\end{Lemma}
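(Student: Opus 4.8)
The plan is to circumvent the fact that only a \emph{one-sided} bound on $\partial_t\t_n$ is available---so the classical Aubin--Lions--Simon lemma, which needs a genuine bound on the full time derivative, does not apply directly---by combining a monotonicity-plus-Helly argument in time with the compact Sobolev embedding in space. First I would record the spatial embeddings: since $\O\subset\R^3$, the compact embedding $W_0^{1,2}(\O)\hookrightarrow\hookrightarrow L^p(\O)$ for $p<6$ dualizes to the compact embedding $L^q(\O)\hookrightarrow\hookrightarrow W^{-1,2}(\O)$ for every $q>\f{6}{5}$, exactly the range in the hypothesis. I would then fix two auxiliary negative-order spaces $W^{-\ell,2}(\O)$ and $W^{-L,2}(\O)$ with $\f{3}{2}<\ell<L$ and $\ell$ large enough (depending on $m,r$) that both $L^1(\O)\hookrightarrow W^{-\ell,2}(\O)$ and $W^{-m,r}(\O)\hookrightarrow W^{-\ell,2}(\O)$ hold, together with the compact embedding $W^{-\ell,2}(\O)\hookrightarrow\hookrightarrow W^{-L,2}(\O)$; such finite $\ell,L$ exist by the usual Sobolev counting in dimension three.

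Next comes the \emph{monotonicity in time}. Setting $z_n(t):=\int_0^t\chi_n(s)\,ds$, the hypothesis $\partial_t\t_n\geq\chi_n$ gives $\partial_t(\t_n-z_n)\geq0$, i.e. for every fixed $\varphi\in C_0^\infty(\O)$ with $\varphi\geq0$ the scalar function $t\mapsto\langle\t_n(t)-z_n(t),\varphi\rangle$ is non-decreasing. For such $\varphi$ the pairing $\langle\chi_n,\varphi\rangle$ is controlled in $L^1(0,T)$ uniformly in $n$ (by $\|\chi_n\|_{L^1([0,T];W^{-m,r})}\|\varphi\|_{W^{m,r'}}$, with $r'=\f{r}{r-1}$), while $\langle\t_n(t),\varphi\rangle$ is bounded uniformly in $t$ and $n$ by the $L^\infty([0,T];L^1(\O))$ bound. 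Thus $t\mapsto\langle\t_n(t),\varphi\rangle$ splits as a uniformly bounded monotone function plus an absolutely continuous function whose derivatives are bounded in $L^1(0,T)$; both pieces have uniformly bounded total variation on $[0,T]$. Helly's selection theorem then yields, for each fixed $\varphi$ in a countable dense subset $\{\varphi_j\}\subset C_0^\infty(\O)$, a subsequence along which $\langle\t_n(t),\varphi_j\rangle$ converges for \emph{every} $t\in[0,T]$; a diagonal extraction makes this simultaneous in $j$.

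With the $L^\infty([0,T];L^1(\O))$ bound giving $\sup_n\|\t_n(t)\|_{W^{-\ell,2}}<\infty$ for a.e. $t$, density of $\{\varphi_j\}$ upgrades the coordinatewise convergence to $\t_n(t)\rightharpoonup\t(t)$ weakly in $W^{-\ell,2}(\O)$ for a.e. $t$. Because $W^{-\ell,2}(\O)\hookrightarrow\hookrightarrow W^{-L,2}(\O)$ is compact, this weak convergence together with the uniform bound becomes \emph{strong} convergence in $W^{-L,2}(\O)$ for a.e. $t$; dominated convergence (using the uniform $L^\infty([0,T];W^{-L,2})$ bound) then promotes it to $\t_n\to\t$ strongly in $L^2([0,T];W^{-L,2}(\O))$. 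Finally I would close the gap to the target space by the Ehrling interpolation inequality associated with the chain $L^q\hookrightarrow\hookrightarrow W^{-1,2}\hookrightarrow W^{-L,2}$: for every $\varepsilon>0$ there is $C(\varepsilon)$ with $\|v\|_{W^{-1,2}}\leq\varepsilon\|v\|_{L^q}+C(\varepsilon)\|v\|_{W^{-L,2}}$. Applying this to $v=\t_n-\t$, squaring and integrating in time gives
\[
\int_0^T\|\t_n-\t\|_{W^{-1,2}}^2\,dt\leq 2\varepsilon^2\int_0^T\|\t_n-\t\|_{L^q}^2\,dt+2C(\varepsilon)^2\int_0^T\|\t_n-\t\|_{W^{-L,2}}^2\,dt,
\]
where the first integral is bounded uniformly in $n$ by the $L^2([0,T];L^q)$ hypothesis and the second tends to $0$; letting $n\to\infty$ and then $\varepsilon\to0$ yields the claim.

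The main obstacle is precisely the one-sided derivative bound: it forces the replacement of the usual time-compactness step of Aubin--Lions by the monotonicity/Helly argument, and it is the reason one cannot argue strong $W^{-1,2}$-convergence pointwise in $t$ (the $L^q$ bound on $\t_n(t)$ holds only in an $L^2$-in-$t$ averaged sense, not uniformly in $t$). Routing the strong convergence through the very weak space $W^{-L,2}$ and recovering $W^{-1,2}$ via Ehrling is the device that repairs this; checking that the auxiliary orders $\ell,L$ can be chosen compatibly with the given $m,r$ is a routine but essential bookkeeping point.
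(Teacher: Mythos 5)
Your proposal is correct, and it should be said up front that the paper itself offers no proof of this lemma at all: it is stated ``without proof'' with a citation to Lemma 6.3 of Feireisl's book \cite{f2}, and your argument is essentially the proof given there --- exploit the one-sided bound to write $\langle\t_n(t),\varphi\rangle$ as a bounded monotone function plus an absolutely continuous one of uniformly bounded variation, extract pointwise-in-$t$ limits by Helly and a diagonal argument, convert to strong convergence in a very negative space via compact embeddings, and recover $W^{-1,2}$ by Ehrling interpolation against the $L^2([0,T];L^q)$ bound. Two routine points deserve a word if you write this up: (i) the BV bound is derived only for $\varphi\geq 0$, so before applying Helly to a countable dense family $\{\varphi_j\}$ you should decompose each $\varphi_j=\bigl(\varphi_j+\|\varphi_j\|_{L^\infty}\psi_j\bigr)-\|\varphi_j\|_{L^\infty}\psi_j$ with $\psi_j\in C_0^\infty(\O)$, $0\leq\psi_j\leq 1$, $\psi_j\equiv 1$ on $\supp\varphi_j$, so that both pieces are non-negative and the pairing with each is of uniformly bounded variation; (ii) in the final Ehrling step you implicitly use $\t\in L^2([0,T];L^q(\O))$, which follows from weak lower semicontinuity (or weak compactness of $\{\t_n\}$ in the reflexive space $L^2([0,T];L^q(\O))$ together with identification of the two limits). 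Neither point is a gap in substance; the proof stands as written.
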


With this lemma in hand, we can show the following property:
\begin{Lemma}\label{l53}
Let $h=\f{1}{1+\t}$, then
$$(\r_\delta+\delta) Q_h(\t_\delta)\rightarrow\r \overline{Q_h(\t)}
\textrm{ in } L^2([0,T]; W^{-1,2}(\O)).$$
\end{Lemma}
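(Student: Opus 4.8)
The plan is to apply the Aubin--Lions variant in Lemma \ref{l52} directly to the sequence $\t_n:=(\r_\delta+\delta)Q_h(\t_\delta)$ with the specific choice $h(\t)=\f{1}{1+\t}$. First I would check that this $h$ is admissible in the sense of \eqref{44}: indeed $h(0)=1>0$, $h$ is non-increasing with $\lim_{\xi\to\infty}h(\xi)=0$, and a direct computation gives $h''h=2(h')^2=2(1+\t)^{-4}$, so the condition $h''h\ge 2(h')^2$ holds with equality. Hence the renormalized inequality \eqref{411} is available for this $h$, and the whole argument reduces to verifying the two hypotheses of Lemma \ref{l52} for $\t_n$, uniformly in $\delta$, and then matching the resulting strong limit with the weak limit already identified in \eqref{531}.

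For the integrability hypothesis I would use that $Q_h(\t)=\int_0^\t\f{c_\upsilon(\xi)}{1+\xi}\,d\xi\le \overline{c_\upsilon}\ln(1+\t)$, which by \eqref{59} is bounded in $L^2([0,T];W^{1,2}(\O))\hookrightarrow L^2([0,T];L^6(\O))$. Combining $\r_\delta\in L^\infty([0,T];L^\gamma(\O))$ from \eqref{51} with H\"older's inequality gives $\r_\delta Q_h(\t_\delta)$ bounded in $L^2([0,T];L^q(\O))$ with $\f1q=\f1\gamma+\f16<\f56$; since $\gamma>\f32$ this yields $q>\f65$, exactly as required. The bound in $L^\infty([0,T];L^1(\O))$ follows at once from $Q_h\le Q$ and \eqref{53}.

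The hard part will be producing the one-sided bound $\partial_t\t_n\ge\chi_n$ with $\chi_n$ bounded in $L^1([0,T];W^{-m,r}(\O))$. The key observation is that \eqref{411}, read with test functions supported in $\O\times(0,T)$ (so the initial term drops), is \emph{precisely} such a one-sided distributional bound for $\partial_t\big((\r+\delta)Q_h(\t)\big)$: moving the $\partial_t\varphi$ term across identifies
$$\chi_n=-\Dv\big(\r Q_h(\t)\u\big)+\Delta K_h(\t)-\delta h(\t)\t^{\alpha+1}+(1-\delta)h(\t)\big(\Psi:\nabla\u+\nu|\nabla\times\H|^2\big)-h'(\t)\kappa(\t)|\nabla\t|^2-h(\t)\t\,p_\t(\r)\Dv\u.$$
I would then bound each piece uniformly in $\delta$. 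The dissipation term $h(\Psi:\nabla\u+\nu|\nabla\times\H|^2)\le \Psi:\nabla\u+\nu|\nabla\times\H|^2$ is uniformly $L^1$ (the $\omega\to0$ limit of \eqref{511}); the entropy term $-h'(\t)\kappa(\t)|\nabla\t|^2=\f{\kappa(\t)}{(1+\t)^2}|\nabla\t|^2$ is uniformly $L^1$ by the estimate behind \eqref{59}; the artificial term $\delta h(\t)\t^{\alpha+1}\le\delta\t^{\alpha+1}$ is uniformly $L^1$ by \eqref{55}; and the pressure term $h(\t)\t\,p_\t(\r)\Dv\u$ is bounded in $L^2([0,T];L^{6/5}(\O))$ since $h\t\le1$, $p_\t(\r_\delta)\in L^\infty_tL^3_x$ (by \eqref{23}, \eqref{51}) and $\Dv\u_\delta\in L^2_{x,t}$ (cf. \eqref{510}). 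All of these lie in $L^1(\O\times(0,T))$, hence in $L^1([0,T];W^{-1,r}(\O))$ for any $1<r<\f32$, via the embedding $L^1(\O)\hookrightarrow W^{-1,r}(\O)$ in three dimensions. The convective term gives $\Dv(\r Q_h(\t)\u)\in L^1([0,T];W^{-1,p}(\O))$ with $p>1$ (again H\"older, using $\r\in L^\infty_tL^\gamma$, $Q_h(\t)\in L^2_tL^6$, $\u\in L^2_tL^6$), while the conductivity term costs the most derivatives: since $K_h(\t)=\int_0^\t\f{\kappa(\xi)}{1+\xi}\,d\xi\lesssim 1+\t^\alpha$ lies in $L^{(\alpha+1)/\alpha}(\O\times(0,T))$ by the temperature estimate \eqref{a7}, one gets $\Delta K_h(\t)\in L^1([0,T];W^{-2,(\alpha+1)/\alpha}(\O))$. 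Taking $m=2$ and a common exponent $r>1$ then gives the required uniform bound on $\chi_n$.

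With both hypotheses verified, Lemma \ref{l52} yields a subsequence along which $\t_n$ converges strongly in $L^2([0,T];W^{-1,2}(\O))$. Finally, I would identify the limit: by \eqref{531} we already know $(\r_\delta+\delta)Q_h(\t_\delta)\to \r\,\overline{Q_h(\t)}$ weakly in $L^2([0,T];L^{6\gamma/(\gamma+6)}(\O))$, and since $L^{6\gamma/(\gamma+6)}(\O)\hookrightarrow W^{-1,2}(\O)$ for $\gamma>\f32$, this weak limit must coincide with the strong $L^2([0,T];W^{-1,2}(\O))$ limit. Hence $(\r_\delta+\delta)Q_h(\t_\delta)\to\r\,\overline{Q_h(\t)}$ in $L^2([0,T];W^{-1,2}(\O))$, which is the claim.
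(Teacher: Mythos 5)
Your proof is correct and follows essentially the same route as the paper: substitute $h(\t)=\f{1}{1+\t}$ into the renormalized thermal energy inequality \eqref{411} to get the one-sided bound on $\partial_t\big((\r_\delta+\delta)Q_h(\t_\delta)\big)$, verify the two hypotheses of Lemma \ref{l52} using the uniform estimates \eqref{51}, \eqref{53}, \eqref{55}, \eqref{59}, \eqref{a7}, and then identify the strong $L^2([0,T];W^{-1,2}(\O))$ limit with the weak limit \eqref{531}. The only cosmetic differences are that the paper simply discards the sign-definite terms $(1-\delta)h(\t_\delta)(\Psi_\delta:\nabla\u_\delta+\nu|\nabla\times\H_\delta|^2)-h'(\t_\delta)\kappa(\t_\delta)|\nabla\t_\delta|^2$ from the lower bound (rather than estimating them in $L^1$ as you do), and it treats the conduction term in divergence form $\Dv\big(\f{\kappa(\t_\delta)}{1+\t_\delta}\nabla\t_\delta\big)$, bounded in $L^1([0,T];W^{-1,r}(\O))$ with $r>1$, instead of your $\Delta K_h(\t_\delta)$ bounded in $W^{-2,(\alpha+1)/\alpha}$; neither variation affects the validity of the argument.
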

\begin{proof}
Substituting $h=\f{1}{1+\t}$ into \eqref{411}, we get
\begin{equation*}
\begin{split}
\partial_t\left((\r_\delta+\delta) Q_h(\t_\delta)\right)\geq &-\Dv(\r_\delta Q_h(\t_\delta)
\u_\delta)+\Dv\left(\f{\kappa(\t_\delta)}{1+\t_\delta}\nabla\t_\delta\right)\\&-\delta
\f{\t_\delta^{\alpha+1}}{1+\t_\delta}-
\f{\t_\delta}{1+\t_\delta}p_\t(\r_\delta)\Dv\u_\delta,
\end{split}
\end{equation*}
in $\mathcal{D}'(\O\times(0,T))$.
Since
$$\f{\t_\delta}{1+\t_\delta}p_\t(\r_\delta)|\Dv\u_\delta|\leq p_\t(\r_\delta)|\Dv\u_\delta|,$$
we know that, in view of \eqref{23} and \eqref{51},
$$\f{\t_\delta}{1+\t_\delta}p_\t(\r_\delta)\Dv\u_\delta \textrm{ is bounded in }
L^2([0,T]; L^r(\O)), \textrm{ for some } r>1,$$
and, consequently,
$$\f{\t_\delta}{1+\t_\delta}p_\t(\r_\delta)\Dv\u_\delta \textrm{ is bounded in }
L^2([0,T]; W^{-k,r}(\O)), \textrm{ for all } k\geq 1.$$
Similarly, by \eqref{a7}
$$\delta\f{\t_\delta^{\alpha+1}}{1+\t_\delta} \textrm{ is bounded in } L^2([0,T];
W^{-k,r}(\O)), \textrm{ for all } k\geq 1,$$
and
$$\Dv(\r_\delta Q_h(\t_\delta)\u_\delta) \textrm{ is bounded in } L^1([0,T];
W^{-1,r}(\O)), \textrm{ for some } r> 1.$$

Next, by \eqref{25}, we have
$$\f{\kappa(\t_\delta)}{1+\t_\delta}|\nabla\t_\delta|\leq c(1+\t_\delta)^{\alpha-1}
|\nabla\t_\delta|\leq c\t_\delta^{\f{\alpha}{2}}|\nabla\t_\delta^{\f{\alpha}{2}}|,
\textrm{ if } \t_\delta\geq 1,$$
and
$$\f{\kappa(\t_\delta)}{1+\t_\delta}|\nabla\t_\delta|\leq c(1+\t_\delta)^{\alpha-1}
|\nabla\t_\delta|\leq c|\nabla\t_\delta|, \textrm{ if } \t_\delta\leq 1,$$
thus, by \eqref{59} and \eqref{a7}
$$\Dv\left(\f{\kappa(\t_\delta)}{1+\t_\delta}\nabla\t_\delta\right)\textrm{ is bounded in }
L^1([0,T]; W^{-1,r}(\O)), \textrm{ for some } r> 1.$$

Finally, since $Q_h(\t)\leq Q(\t)$, by \eqref{53}, we deduce that $$(\r_\delta+\delta)
Q_h(\t_\delta)\in L^\infty([0,T];L^1(\O)).$$
Hence, combining Lemma \ref{l52} and \eqref{531} together, we have
$$(\r_\delta+\delta) Q_h(\t_\delta)\rightarrow\r \overline{Q_h(\t)}  \textrm{ in }
L^2([0,T]; W^{-1,2}(\O)).$$
\end{proof}

Lemma \ref{l53} and \eqref{530} imply
\begin{equation}\label{532}
(\r_\delta+\delta) Q_h(\t_\delta)M(\t_\delta)\rightarrow\r \overline{Q_h(\t)}
\quad\overline{M(\t)} \textrm{ in } L^1(\O\times(0,T)),
\end{equation}
where $h(\t)=\f{1}{1+\t}$.
On the other hand, choosing $M(\t)=\t$, then $\t Q_h(\t)$ satisfies \eqref{530} since $\alpha>2$. Hence,
\begin{equation}\label{533}
(\r_\delta+\delta) Q_h(\t_\delta)\t_\delta\rightarrow \r
\overline{\t Q_h(\t)}\textrm{ weakly in } L^1(\O\times(0,T)).
\end{equation}
Properties \eqref{532} and \eqref{533} implies
$$\overline{\t Q_h(\t)}=\overline{Q_h(\t)}\t, \textrm{ a.e. on }\{\r>0\},$$
which yields
\begin{equation}\label{534}
\t_\delta\rightarrow\t \textrm{ in } L^{1}(\O\times(0,T)).
\end{equation}
Indeed, we know that $Q_h(\t)$ is strictly increasing and its
derivative has upper bound, therefore its inverse $Q_h^{-1}(\t)$
exists and has lower bound ${1/\overline{c_\upsilon}}$. Thus,
\begin{equation*}
\begin{split}
&\int_0^T\!\!\!\!\int_{\O}|Q_h(\t)-Q_h(\t_\delta)|^2\,dxdt\\
&\leq \overline{c_\upsilon}\int_0^T\!\!\!\!\int_{\O}(Q_h^{-1}(Q_h(\t))-Q_h^{-1}(Q_h(\t_\delta)))
(Q_h(\t)-Q_h(\t_\delta))\,dxdt\\
&= \overline{c_\upsilon}\int_0^T\!\!\!\!\int_{\O}(\t-\t_\delta)
(Q(\t)-Q(\t_\delta))\,dxdt\rightarrow 0, \textrm{ as }
\delta\rightarrow 0.
\end{split}
\end{equation*}
Therefore,
$$Q_h(\t_\delta)\rightarrow Q_h(\t), \textrm{ in } L^2(\O\times(0,T)),
\quad\textrm{ as } \delta\rightarrow 0,$$
and, hence,
$$Q_h(\t_\delta)\rightarrow Q_h(\t), \textrm{ a.e. in } \O\times(0,T),
\quad\textrm{ as } \delta\rightarrow 0.$$
Because $Q_h^{-1}(\t)$ is continuous, we deduce that
$$\t_\delta=Q_h^{-1}(Q_h(\t_\delta))\rightarrow \t=Q_h^{-1}(Q_h(\t)),
\textrm{ a.e. in } \O\times(0,T),\quad\textrm{ as } \delta\rightarrow 0,$$
which, combining Egorov's theorem, Theorem 2.10 in \cite{f2}, and the
weak convergence of $\{\t_\delta\}$ to $\t$ in $L^1(\O\times(0,T))$,
verifies \eqref{534}.

Finally, \eqref{534}, together with \eqref{523} and Lemma \ref{l41}, implies
\begin{equation}\label{535}
\Psi_\delta=\mu(\t_\delta)(\nabla\u_\delta+\nabla\u_\delta^T)+\lambda(\t_\delta)\Dv\u_\delta\mathbf{I}\rightarrow
\Psi=\mu(\t)(\nabla\u+\nabla\u^T)+\lambda(\t)\Dv\u\mathbf{I},
\end{equation}
in $\mathcal{D}'(\O\times(0,T))$, and,
\begin{equation}\label{536}
\int_0^T\!\!\!\!\int_{\O}h(\t_\delta)\Psi_\delta:\nabla\u_\delta\,dxdt\geq
\int_0^T\!\!\!\!\int_{\O}h(\t)\Psi:\nabla\u\,dxdt.
\end{equation}

\subsection{ Strong convergence of the density}

In this subsection, we will adopt the technique  in
\cite{f2} to show the strong convergence of the density,
specifically,
\begin{equation}\label{537}
\r_\delta\rightarrow\r \textrm{ in } L^1(\O\times(0,T)).
\end{equation}

First, due to \eqref{517}, Proposition 2.1 in \cite{f2} and \eqref{525}, we can assume that
\begin{equation}\label{538}
p(\r_\delta, \t_\delta)\rightarrow \overline{p(\r,\t)} \textrm{ weakly in } L^1(\O\times(0,T)),
\end{equation}
which, together with  \eqref{521}-\eqref{528}, implies that
\begin{equation}\label{539}
\rho_t +\Dv(\r\u)=0,
\end{equation}
\begin{equation}\label{540}
\partial _t(\r \u)+\Dv(\r \u\otimes \u) + \nabla \ov{p(\r,\t)}=(\na \times \H)\times \H+\Dv\Psi,
\end{equation}
\begin{equation}\label{541}
\H_t-\nabla\times(\u\times\H)=-\nabla\times(\nu\nabla\times\H),\quad \Dv\H=0,
\end{equation}
in $\mathcal{D}'(\O\times(0,T))$.

Similarly to Section 5.3 in \cite{hw1}, we use
$$\varphi_i(x,t)=\psi(t)\phi(x)\mathcal{A}_i[T_k(\r_\dl)], \, \psi\in
\mathcal{D}(0,T), \, \phi\in \mathcal{D}(\O), \, i=1,2,3,$$ as
test functions for the modified momentum balance equation
\eqref{48}, where $\mathcal{A}_i$ can be expressed by their
Fourier symbol as
$$\mathcal{A}_i[\cdot]
=\mathcal{F}^{-1}\left[\f{-\mathrm{i}\xi_i}{|\xi|^2}\mathcal{F}[\cdot]\right],
\, i=1,2,3,$$
and $T_k(\r)$ are cut-off functions,
$$ T_k(\r)=\min\{\r,k\}, \, k\geq 1.$$
A lengthy but straightforward computation shows that
\begin{equation}\label{542}
\begin{split}
&\int_0^T\!\!\!\!\int_{\O}\psi\phi\left((p(\r_\delta,\t_\delta)+\dl\r_\dl^\beta-\lambda(\t_\delta)\Dv
\u_\dl)T_k(\r_\dl)-2\mu(\t_\delta)\f{\partial\u^i_\delta}{\partial
x_j}\mathcal{R}_{i,j}[T_k(\r_\dl)]\right)\,dxdt\\
&=\int_0^T\!\!\!\!\int_{\O}\psi\partial_{x_i}\phi\left(\lambda(\t_\delta)\Dv
\u_\dl-p(\r_\delta,\t_\delta)-\dl\r_\dl^\beta\right)\mathcal{A}_i
[T_k(\r_\dl)]\,dxdt\\
&\quad +
\int_0^T\!\!\!\!\int_{\O}\psi\mu(\t_\delta)\left(\f{\partial\u^i_\delta}{\partial
x_j}+\f{\partial\u^j_\delta}{\partial x_i}\right)\partial_{x_j}\phi
\mathcal{A}_i[T_k(\r_\dl)]\,dxdt\\
&\quad -
\int_0^T\!\!\!\!\int_{\O}\phi\rho_\dl u_\dl^i\Big(\psi_t
\mathcal{A}_i[T_k(\r_\dl)]+\psi
\mathcal{A}_i[(T_k(\r_\dl)-T'_k(\r_\dl)\r_\dl)\Dv
\u_\dl]\Big)dxdt\\
&\quad - \int_0^T\!\!\!\!\int_{\O}\psi\r_\dl
u^i_\dl u_\dl^j\partial_{x_j}\phi
\mathcal{A}_i[T_k(\r_\dl)]\,dxdt\\
&\quad +\int_0^T\!\!\!\!\int_{\O}\psi
u_\dl^i\left(T_k(\r_\dl)\mathcal{R}_{i,j}[\r_\dl u^j_\dl]-\phi \r_\dl
u^j_\dl \mathcal{R}_{i,j}[T_k(\r_\dl)]\right)dxdt\\
&\quad -\int_0^T\!\!\!\!\int_{\O}\psi\phi(\na \times \H_\dl)\times
\H_\dl\cdot\mathcal{A}[T_k(\r_\dl)]\,dxdt,
\end{split}
\end{equation}
where the operators
$\mathcal{R}_{i,j}=\partial_{x_j}\mathcal{A}_i[v]$ and the summation
convention is used to simplify notations.
On the other hand, following  the arguments of Section 5.3 in \cite{hw1}, one has
\begin{equation}\label{543}
\begin{split}
&\int_0^T\!\!\!\!\int_{\O}\psi\phi\left(\left(\ov{p(\r,\t)}-\overline{\lambda(\t)\Dv
\u}\right)\ov{T_k(\r)}-2\ov{\mu(\t)\f{\partial\u^i}{\partial
x_j}}\mathcal{R}_{i,j}[\ov{T_k(\r)}]\right)dxdt\\
&=
\int_0^T\!\!\!\!\int_{\O}\psi\partial_{x_i}\phi\left(\ov{\lambda(\t)\Dv
\u}-\ov{p(\r,\t)}\right)\mathcal{A}_i [\ov{T_k(\r)}]\,dxdt\\
&\quad +
\int_0^T\!\!\!\!\int_{\O}\psi\ov{\mu(\t)\left(\f{\partial\u^i}{\partial
x_j}+\f{\partial\u^j}{\partial x_i}\right)}\partial_{x_j}\phi
\mathcal{A}_i[\ov{T_k(\r)}]\,dxdt\\
&\quad -
\int_0^T\!\!\!\!\int_{\O}\phi\r u^i\left(\psi_t
\mathcal{A}_i[\ov{T_k(\r)}]+\psi
\mathcal{A}_i[\ov{(T_k(\r)-T'_k(\r)\r)\Dv \u}]\right)dxdt\\
&\quad -
\int_0^T\!\!\!\!\int_{\O}\psi\rho u^i u^j\partial_{x_j}\phi
\mathcal{A}_i[\ov{T_k(\r)}]\,dxdt\\
&\quad +
\int_0^T\!\!\!\!\int_{\O}\psi
u^i\left(\ov{T_k(\r)}\mathcal{R}_{i,j}[\phi\r u^j]-\phi \r u^j
\mathcal{R}_{i,j}[\ov{T_k(\r)}]\right)dxdt\\
&\quad -
\int_0^T\!\!\!\!\int_{\O}\psi\phi(\na \times \H)\times
\H\cdot\mathcal{A}[\ov{T_k(\r)}]\,dxdt.
\end{split}
\end{equation}
Now, following the argument in Section 5.3 in \cite{hw1}, the
Div-Curl Lemma can be used in order to show that the right-hand side
of \eqref{542} converges to that of \eqref{543}, that is
\begin{equation}\label{544}
\begin{split}
&\lim_{\delta\rightarrow
0}\int_0^T\!\!\!\!\int_{\O}\psi\phi\left((p(\r_\delta,\t_\delta)-\lambda(\t_\delta)\Dv
\u_\dl)T_k(\r_\dl)-2\mu(\t_\delta)\f{\partial\u^i_\delta}{\partial
x_j}\mathcal{R}_{i,j}[T_k(\r_\dl)]\right)dxdt\\&=
\int_0^T\!\!\!\!\int_{\O}\psi\phi\left((\ov{p(\r,\t)}-\overline{\lambda(\t)\Dv
\u})\ov{T_k(\r)}-2\ov{\mu(\t)\f{\partial\u^i}{\partial
x_j}}\mathcal{R}_{i,j}[\ov{T_k(\r)}]\right)dxdt.
\end{split}
\end{equation}
Noting that
\begin{equation*}
\begin{split}
&\int_0^T\!\!\!\!\int_{\O}\varphi\mu(\t_\delta)\f{\partial\u^i_\delta}{\partial
x_j}\mathcal{R}_{i,j}[T_k(\r_\dl)]\,dxdt\\&=
\int_0^T\!\!\!\!\int_{\O}\left(\mathcal{R}_{i,j}\left[\varphi\mu(\t_\delta)
\f{\partial\u^i_\delta}{\partial x_j}\right]-\varphi\mu(\t_\delta)\mathcal{R}_{i,j}
\left[\f{\partial\u^i_\delta}{\partial x_j}\right]\right) T_k(\r_\dl)\,dxdt\\
&\quad +\int_0^T\!\!\!\!\int_{\O}\varphi\mu(\t_\delta)\Dv\u_\delta
T_k(\r_\delta)\,dxdt,
\end{split}
\end{equation*}
for any $\varphi\in\mathcal{D}(\O\times(0,T))$, we have,
using  also \eqref{544},
\begin{equation}\label{545}
\begin{split}
&\lim_{\delta\rightarrow
0}\int_0^T\!\!\!\!\int_{\O}\varphi\big((p(\r_\delta,\t_\delta)-
(\lambda(\t_\delta)+2\mu(\t_\delta))\Dv
\u_\dl)T_k(\r_\dl)\big)dxdt\\&=
\int_0^T\!\!\!\!\int_{\O}\varphi\left(\left(\ov{p(\r,\t)}-\overline{(\lambda(\t)+2\mu(\t))\Dv
\u}\right)\ov{T_k(\r)}\right)dxdt,
\end{split}
\end{equation}
since Lemma 4.2 in \cite{f1} and the strong convergence of the
temperature give
\begin{equation*}
\begin{split}
\int_0^T\!\!\!\!\int_{\O}&\left(\mathcal{R}_{i,j}\left[\varphi\mu(\t_\delta)
\f{\partial\u^i_\delta}{\partial
x_j}\right]-\varphi\mu(\t_\delta)\mathcal{R}_{i,j}
\left[\f{\partial\u^i_\delta}{\partial x_j}\right]\right)
T_k(\r_\dl)\,dxdt\\&\rightarrow\int_0^T\!\!\!\!\int_{\O}\left(\mathcal{R}_{i,j}\left[\varphi\mu(\t)
\f{\partial\u^i}{\partial
x_j}\right]-\varphi\mu(\t)\mathcal{R}_{i,j}
\left[\f{\partial\u^i}{\partial x_j}\right]\right) T_k(\r)\,dxdt.
\end{split}
\end{equation*}
As in Section 5.4 of \cite{hw1}, we can conclude from \eqref{545}
that there exists a constant $c$ independent of $k$ such that
$$\limsup_{\dl\rightarrow 0+}\|
T_k(\r_\dl)-T_k(\r)\|_{L^{\gamma+1}((0,T)\times \O)}\leq c.$$ This
implies, in particular, that the limit functions $\r$, $\u$ satisfy
the continuity equation \eqref{11} in the sense of renormalized
solutions (cf. Lemma 5.4 in \cite{hw1}).

Finally, following the argument as in Section 5.6 in \cite{hw1}, \eqref{537} is verified.

\subsection{ Thermal energy equation}

In order to complete the proof of Theorem \ref{mt}, we have to show
that $\r$, $\u$, $\t$ and $\H$ satisfy the thermal energy equation
\eqref{115} in the sense of Definition \ref{d1}.

In view of \eqref{a7} and \eqref{534}, we have, as $\delta\to 0$,
$$\t_\dl\rightarrow\t \textrm{ in } L^p(\O\times(0,T)), \textrm{ for all } 1\leq p<1+\alpha.$$
Hence, by the Lebesgue's dominated convergence theorem and the hypothesis \eqref{25}, we know, as $\delta\to 0$,
$$K_h(\t_\dl)\rightarrow K_h(\t) \textrm{ in } L^1(\O\times(0,T)). $$
By \eqref{522}, \eqref{523} and \eqref{537}, we have, as $\delta\to 0$,
$$\r_\dl Q_h(\t_\dl)\u_\dl\rightarrow\r Q_h(\t)\u,$$
$$\r_\dl Q_h(\t_\dl)\rightarrow \r Q_h(\t),$$
$$h(\t_\dl)\t_\dl p_\t(\r_\dl)\Dv\u_\dl\rightarrow h(\t)\t p_\t(\r)\Dv\u,$$
in $\mathcal{D}'(\O\times(0,T))$.

Due to the strong convergence of the temperature \eqref{534} and
\eqref{536}-\eqref{537}, we can pass the limit as
$\delta\rightarrow 0$ in \eqref{411} to obtain
\begin{equation}\label{546}
\begin{split}
&\int_0^T\!\!\!\!\int_{\O}\left(\r Q_h(\t)\partial_t\varphi+\r Q_h(\t)\u\cdot\nabla\varphi+K_h(\t)\Delta\varphi\right) dxdt\\
&\leq-\int_0^T\!\!\!\!\int_{\O}(h(\t)(\Psi:\nabla\u+\nu|\nabla\times\H|^2)\varphi\,dxdt\\
&\quad +\int_0^T\!\!\!\!\int_{\O}h(\t)\t
p_{\t}(\r)\Dv\u\varphi\,dxdt-\int_{\O}\r_{0}Q_h(\t_{0})\varphi(0)\,dx,
\end{split}
\end{equation}
since
$$\delta\left|\int_0^T\!\!\!\!\int_{\O}\t_\delta^{1+\alpha}h(\t_\delta)dxdt\right|\leq \delta \left|\int_{\{\t_\delta\leq
M\}}\t_\delta^{1+\alpha}dxdt\right|+h(M)\delta\int_0^T\!\!\!\!
\int_{\O}\t_\delta^{1+\alpha}dxdt,$$ which tends to zero
as $\delta\rightarrow 0$, because the first term on the right-hand
side tends to zero for fixed M as $\delta\rightarrow 0$ while the
second term can be made arbitrarily small by taking M large enough
in view of \eqref{44} and \eqref{55}.

Next, taking
$$h(\t)=\f{1}{(1+\t)^\omega}, \quad 0<\omega<1,$$
in \eqref{546}, letting $\omega\rightarrow 0$, and using
Lebesgue's dominated convergence theorem, we get
\begin{equation}\label{547}
\begin{split}
&\int_0^T\!\!\!\!\int_{\O}\left(\r Q(\t)\partial_t\varphi+\r Q(\t)\u\cdot\nabla\varphi+K(\t)
\Delta\varphi\right)dxdt\\
&\leq \int_0^T\!\!\!\!\int_{\O}(\t
p_{\t}(\r)\Dv\u-\nu|\nabla\times\H|^2-\Psi:\nabla\u)\varphi\,dxdt,
\end{split}
\end{equation}
for any $\varphi\in \mathcal{D}(\O\times(0,T))$ and $\varphi\geq 0$,
since $\r Q_h(\t)\leq\r Q(\t) $ belongs to $L^1(\O\times(0,T))$ by
\eqref{51}, \eqref{26}, and \eqref{515}.

Finally, dividing \eqref{21} by $1+\t$ and using equation
\eqref{11}, we get
\begin{equation*}
\begin{split}
&\partial_t(\r f(\t))+\Dv(\r f(\t)\u)+\Dv\left(\f{q}{1+\t}\right) \\
&\geq\f{1}{1+\t}(\nu|\nabla\times\H|^2
+\Psi:\nabla\u)-\f{q\cdot\nabla\t}{(1+\t)^2}
-\f{\t}{1+\t}p_\t(\r)\Dv\u,
\end{split}
\end{equation*}
 in the sense of distributions, where
$$f(\t)=\int_0^\t\f{c_\nu(\xi)}{1+\xi}\,d{\xi}.$$
Integrating the above inequality over $\O\times(0,T)$, we deduce
\begin{equation}\label{548}
\begin{split}
&\int_0^T\!\!\!\!\int_\O \left(\f{1}{1+\t}(\nu|\nabla\times\H|^2+\Psi:\nabla\u)+\f{k(\t)|\nabla\t|^2}{(1+\t)^2}
\right)dxdt\\
&\le 2\sup_{0\le t\le T}\int_\O\r f(\t)\,dx+
\int_0^T\!\!\!\!\int_\O\f{\t}{1+\t} p_\t(\r)|\Dv\u|\,dxdt.
\end{split}
\end{equation}
By H\"{o}lder's inequality, the estimates \eqref{51}, \eqref{513},
and the hypothesis \eqref{23}, one has
$$\int_0^T\!\!\!\!\int_\O\f{\t}{1+\t}
p_\t(\r)|\Dv\u|\,dx\,dt\le\int_0^T\!\!\!\!\int_\O
p_\t(\r)|\Dv\u|\,dx\,dt\le c.$$
Similarly, by H\"{o}lder's
inequality, the assumption \eqref{26}, and the estimates \eqref{51},
\eqref{a7}, we have
$$\int_\O\r f(\t)\,dx\le c\int_\O\r \t\,dx\le c.$$
Thus, \eqref{548} and the assumption \eqref{25} imply that
$$\ln(1+\t)\in  L^2([0,T]; W^{1,2}(\O)), \quad
\t^{\f{\alpha}{2}}\in L^2([0,T]; W^{1,2}(\O)).$$
This completes our proof of Theorem \ref{mt}.

\bigskip\bigskip

\section*{Acknowledgments}

Xianpeng Hu's research was supported in part by the National Science Foundation grant  DMS-0604362.
Dehua Wang's research was supported in part by the National Science
Foundation grants DMS-0244487, DMS-0604362, and the Office of Naval
Research grant N00014-01-1-0446.

\bigskip\bigskip

\end{document}